\numberwithin{equation}{section}
\newtheorem{theorem}{Theorem}[section]
\newtheorem{lemma}[theorem]{Lemma}
\newtheorem{proposition}[theorem]{Proposition}
\newtheorem{rmk}{Remark}
\numberwithin{rmk}{section}
\newenvironment{pf}{{\noindent \it \bf Proof:}}{{\hfill$\Box$}\\}
\providecommand{\abs}[1]{\left\vert#1\right\vert}
\providecommand{\norm}[1]{\left\Vert#1\right\Vert}
\def\D{\partial}
\def\DD{\nabla}
\def\Div{\text{div}}
\def\hal{\frac{1}{2}}
\def\vep{\varepsilon}
\def\ls{\lesssim}
\begin{document}

\title[Inviscid limits of Compressible Viscoelastic Equations]
{Vanishing viscosity limits of   compressible viscoelastic equations in half space}

\author{Xumin Gu}
\address
{School of Mathematics, Shanghai University of Finance and Economics, Shanghai 200433, China}
\email{gu.xumin@shufe.edu.cn.} 

\author{Dehua Wang}
\address
{Department of Mathematics, University of Pittsburgh, Pittsburgh, PA 15260, USA}
\email{dwang@math.pitt.edu.} 

\author{Feng Xie}
\address
{School of Mathematical Sciences, and CMA-Shanghai, Shanghai Jiao Tong University, Shanghai 200240, China}
\email{tzxief@sjtu.edu.cn.} 

\keywords{Viscoelastic fluids; Vanishing viscosity; Compressible fluids; Elastodynamics.}

\subjclass[2020]{35Q35, 35R35, 76A10, 76N10, 76N20}

\begin{abstract}
In this paper we consider the vanishing viscosity limit of solutions to the initial boundary value problem for compressible viscoelastic equations in the half space. When the initial deformation gradient does not degenerate and there is   no vacuum initially, we establish the uniform regularity estimates of solutions to the initial-boundary value problem for the three-dimensional compressible viscoelastic equations in the Sobolev spaces. Then we justify the vanishing viscosity limit of solutions of the  compressible viscoelastic equations based on the uniform regularity estimates and the compactness arguments. Both the no-slip boundary condition  and the Navier-slip type boundary condition on velocity are addressed in this paper. On the one hand, for the corresponding vanishing viscosity limit of the compressible Navier-Stokes equations with the no-slip boundary condition, it is impossible to derive such uniform energy estimates of solutions due to the appearance of strong boundary layers. Consequently, our results show that the deformation gradient can prevent the formation of strong boundary layers. On the other hand, these results also provide two different kinds of suitable boundary conditions for the well-posedness of the initial-boundary value problem of the elastodynamic equations via the vanishing viscosity limit method. Finally, it is worth noting that we take advantage of the Lagrangian coordinates   to study the vanishing viscosity limit for the fixed boundary problem in this paper.

\end{abstract}

\date{\today}

\maketitle

\section{Introduction}
\subsection{Formulation in the Eulerian coordinates}
In this paper we are concerned with the vanishing viscosity limit of solutions to the compressible isentropic neo-Hookean viscoelastic fluids in the half space
$\mathbb{R}^3_+=\{(x, y, z)\in \mathbb{R}^3: (x, y)\in \mathbb{R}^2, z\geq 0\}$, governed by the following equations:
\begin{equation}\label{FVEE}
	\begin{cases}
		\D_t \rho^\varepsilon+\Div(\rho^\varepsilon u^\varepsilon)=0,\\
		\D_t(\rho^\varepsilon u^\varepsilon)+\Div(\rho^\varepsilon u^\varepsilon\otimes u^\varepsilon)+\DD p(\rho^\varepsilon)-\Div T^\varepsilon=\Div(\rho^\vep F^\vep {F^\vep}^\top),\\
		\D_t F^\vep+u^\vep\cdot \DD F^\vep=\DD u^\vep F^\vep,
	\end{cases}
\end{equation}
where $\rho^\vep$ denotes the density, $u^\varepsilon=(u_1^\varepsilon,u_2^\vep,u_3^\varepsilon)^\top\in\mathbb{R}^3$ is the velocity, and $F^\varepsilon=(F^\varepsilon_{ij})\in\mathbb{M}^{3\times 3}$ is the deformation gradient.
The pressure $p(\rho^\varepsilon)$ takes the following form: 
\begin{equation}\label{gamma-law}
	p(\rho^\vep)=A(\rho^\varepsilon)^\gamma,\quad\gamma>1,
\end{equation}
and without loss of generality we assume $A=1$ 
for  simplicity.  The viscous stress tensor $T^\vep$ is of the form
\begin{equation}\label{vis-tensor}
	T^\vep=2\mu\varepsilon Su^\varepsilon+\lambda\varepsilon \Div u^\varepsilon{I},
\end{equation}
where $Su^\varepsilon=\frac{1}{2}(\DD u^\varepsilon+(\DD u^\vep)^\top)$ is the symmetric part of $\DD u^\varepsilon$, and $\mu\vep$, $\lambda\vep$ are two viscosity coefficients which satisfy the physical constrains that $\mu>0$ and $2\mu+3\lambda>0$.
In the paper $\top$ denotes the transpose of a matrix.

We will study the vanishing viscosity limit of solutions to the initial-boundary value problem for the system \eqref{FVEE}-\eqref{vis-tensor} with the initial data:
\begin{equation}\label{V-Ini-D}
	(\rho^\vep, u^\vep, F^\vep)|_{t=0}=(\rho_0^\vep,u_0^\vep,F_0^\vep),
\end{equation}
%
under the following two types of boundary conditions:

\noindent {\bf Type I.} No-slip boundary condition:
 \begin{equation}\label{VKBC}
	u^\varepsilon=0,\text{ on } \{z=0\}.
\end{equation}
{\bf Type II.} Navier-slip boundary condition:
\begin{equation}
\label{VKBCN}
        u^\vep\cdot n=0,\,\, (\rho^\vep F^\vep F^{\vep \top}n+2\mu\vep{Su}^\vep n)_\tau=-\alpha\varepsilon u^\varepsilon_\tau,\,\,\text{on } \{z=0\},
\end{equation}
where $f_\tau=f-(f\cdot n) n$ and $n=(0, 0, -1)$ is the unit outward normal of the boundary of $\mathbb{R}^3_+$.

Formally, when $\vep\rightarrow 0$, the system \eqref{FVEE}--\eqref{V-Ini-D} can be reduced to the compressible elastodynamic  system:
\begin{equation}\label{FEE}
\begin{cases}
\D_t \rho+\Div(\rho u)=0,\\
\D_t(\rho u)+\Div(\rho u\otimes u)+\DD p(\rho)=\Div(\rho FF^\top),\\
\D_t F+ u\cdot \DD F=\DD u F,\\
 (\rho,u,F)|_{t=0}=(\rho_0,u_0,F_0).
\end{cases}
\end{equation}
The aim of this paper is to justify rigorously that the solutions of the initial-boundary value problem \eqref{FVEE}--\eqref{V-Ini-D} for the viscoelastic flows under the two different   boundary conditions \eqref{VKBC} and \eqref{VKBCN} converge to the solutions of the elastodynamic equations \eqref{FEE} of inviscid   flows as the viscosity tends to zero. As a direct application, we will give two kinds of suitable boundary conditions for the well-posedness of the initial-boundary value problem for the elastodynamic equations via the vanishing viscosity  method.

\subsection{Reformulation in the Lagrangian coordinates}
We shall  derive   {\it a priori} estimates that are uniform in $\vep$ for the solutions to the initial-boundary value problem \eqref{FVEE}--\eqref{V-Ini-D} under two different  boundary conditions   \eqref{VKBC} and   \eqref{VKBCN}, respectively. Instead of the Eulerian coordinates, we shall study the problem under the Lagrangian coordinates, as widely used in elastodynamics (system \eqref{FVEE} for $\vep=0$).

Let $\eta^\vep(x,t)\in \mathbb{R}^3_+$ be the ``position" of the compressible viscoelastic fluid particle $x$ at time $t$, that is,
\begin{equation}\label{flow map}
\begin{cases}
	\partial_t\eta^\vep(x,t)=u^\vep(\eta^\vep(x,t),t),& \text{ for }t>0, \ x\in \mathbb{R}^3_+,\\
	\eta^\vep(x,0)=\eta_0^\vep(x), &\text{ for }x\in \mathbb{R}^3_+,
\end{cases}
\end{equation}
where $\eta^\vep_0$ is a diffeomorphism mapping from the reference domain $\mathbb{R}^3_+$ to the initial domain $\mathbb{R}^3_+$ satisfying
\begin{equation}\label{Ini-flow map}
F^\vep_0(\eta^\vep_0)=\nabla\eta^\varepsilon_0.
\end{equation}
We introduce the Lagrangian variables as follows:
\begin{align*}
f^\vep(x,t)=\rho^\varepsilon(\eta^\vep(x,t),t),~~v^\vep(x,t)=u^\vep(\eta^\vep(x,t),t), ~~G^\vep(x,t)=F^\varepsilon(\eta^\vep(x,t),t),
\end{align*}
and set
\begin{align*}
A^\vep=(\DD\eta^\vep)^{-\top},~~J^\vep=\text{det} \DD\eta^\vep,~~ a^\vep=J^\vep A^\vep,~~q(f^\vep)=p(f^\vep).
\end{align*}
 Then, after using the chain rule and Einstein's summation convention for the repeated indices, the compressible viscoelastic fluid equations \eqref{FVEE}--\eqref{V-Ini-D} can be formulated in terms of the Lagrangian variables in $\mathbb{R}^3_+$ as follows:
\begin{equation}\label{FEL}
\begin{cases}
\D_t\eta^\vep=v^\varepsilon,\\
\D_tf^\vep+f^\vep \Div_\eta v^\vep=0,\\
f^\vep\D_tv^\vep_i+\partial_{\eta_i}q(f^\vep)-2\mu\vep \partial_{\eta_k}(S_\eta (v^\vep))_{ik}-\lambda\vep \partial_{\eta_i}(\Div_\eta v^\vep)-\partial_{\eta_k}(f^\vep G^\vep_{ij}G^\vep_{kj})=0,\\
\D_tG^\vep_{ij}=\D_{\eta_k}v^\vep_iG^\vep_{kj},\\
(f^\vep,v^\vep,G^\vep,\eta^\vep)(x)|_{t=0}=(\rho^\vep_0(\eta^\vep_0(x)), u^\vep_0(\eta^\vep_0(x)), F^\vep_0(\eta^\vep_0(x)),\eta^\vep_0(x)),
\end{cases}
\end{equation}
where
$$\D_{\eta_i}=A^\vep_{i\ell}\D_\ell, \quad (S_\eta v)^\vep_{ik}=\frac{1}{2}(\D_{\eta_k}v^\vep_i+\D_{\eta_i}v^\vep_k), \quad\Div_\eta v^\vep=\D_{\eta_k}v^\vep_k.$$
From Jacobi's formula
\begin{equation}\label{Jacobi's formula}
\D_t J^\vep=J^\vep A^\vep_{kj}\D_jv^\vep_k=J^\vep\Div_\eta v^\vep,
\end{equation}
the second equation in \eqref{FEL} can be directly solved to obtain $\D_t(f^\vep J^\vep)=0$, which implies
\begin{equation}\label{f-eqn}
f^\vep=\tilde{\rho}^\vep_0(J^\vep)^{-1},\text{ and }\tilde{\rho}^\vep_0:=\rho^\vep_0(\eta^\vep_0)J^\vep_0.
\end{equation}
Next, from the fact that $\D_t A^\vep_{ij}=-A^\vep_{ik}\D_kv^\vep_lA^\vep_{lj}$ and the fourth equation in \eqref{FEL}, it is straightforward  to verify that $\D_t({A^\vep}^\top{G}^\vep)=0,$
which yields
\begin{equation}\label{G-eqn}
{G}^\vep=\DD \eta^\vep(\DD\eta^\vep_0)^{-1}F^\vep_0(\eta_0^\vep)=\DD\eta^\vep.
\end{equation}
Moreover,  noticing that ${a}^\vep$ is the cofactor of  $\nabla\eta^\vep$   one has the following Piola identity:
\begin{equation}\label{Piola}
\D_l a^\vep_{kl}=\D_l(J^\vep A^\vep_{kl})=0,
\end{equation}
which leads to
\begin{equation*}
-A^\vep_{kl}\D_l(f^\vep G^\vep_{ij}G^\vep_{kj})=-(J^\vep)^{-1}\D_l(A^\vep_{kl}f^\vep J^\vep G^\vep_{ij}G^\vep_{kj}).
\end{equation*}
Therefore, based on 
\eqref{f-eqn} and \eqref {G-eqn}, it suffices to consider the following system of equations rather than the system \eqref{FEL},
\begin{equation}\label{FEL-final}
\begin{cases}
\D_t\eta^\vep_i=v^\vep_i,\\
\tilde{\rho}_0^\vep\D_tv^\vep_i+a_{ik}^\vep\D_kq^\vep-2\mu\vep a^\vep_{kl}\D_l(S_\eta v)^\vep_{ik}-\lambda\vep a^\vep_{ij}\D_j(\Div_\eta v^\vep)-\D_j(\tilde{\rho}_0^\vep\D_j\eta^\vep_i)=0,\\
(v^\vep,\eta^\vep)(x)|_{t=0}=(u_0^\vep(\eta^\vep_0(x)),\eta^\vep_0(x)).
\end{cases}
\end{equation}
Taking $\vep=0$ in \eqref{FEL-final} formally  leads to the following elastodynamic equations in the Lagrangian coordinates:
\begin{equation}\label{FEL-final-2}
	\begin{cases}
		\D_t\eta_i=v_i,\\
		\tilde{\rho}_0\D_tv_i+a_{ik}\D_kq-\D_j(\tilde{\rho}_0\D_j\eta_i)=0,\\
		(v,\eta)(x)|_{t=0}=(u_0(\eta_0(x)),\eta_0(x)).
	\end{cases}
\end{equation}

We now remark   why we introduce the Lagrangian flow map for the fixed boundary problem.
Notice that the deformation gradient effect is in fact related to the motion of the fluid itself. Precisely, if we use the Lagrangian flow map, the system of equations \eqref{FVEE} can be rewritten in a relatively concise form of \eqref{FEL-final} with the boundary   unchanged under the both  boundary conditions \eqref{VKBC} and \eqref{VKBCN}.

Under the Lagrangian coordinates, the corresponding boundary conditions can be written as the following:

\noindent
{\bf Type I.} No-slip boundary condition:
\begin{equation}
        \label{bdDirichlet}
        v_i^\varepsilon=0\quad (i=1,2,3),\text{ on } \{z=0\}.
\end{equation}
{\bf Type II.} Navier-slip boundary condition:
\begin{equation}
        \label{bdNavier}
		\begin{cases}
			&v_3^\varepsilon=0,\,\, a^\varepsilon_{\cdot 3}=(0,0,a^\varepsilon_{33})\text{ on } \{z=0\},\\&\tilde\rho_0^\vep F^\vep_{i3}\abs{a_{\cdot 3}^\vep}^2-\tilde{\rho}_0^\vep J^\vep a_{i3}^\vep+2\mu\vep\left((S_\eta(v^\vep))_{i3}a_{33}^\vep\abs{a_{\cdot 3}^\vep}^2-(S_\eta(v^\vep))_{33}(a_{33}^\vep)^2a_{i3}^\vep\right) \\
			&\qquad\qquad\qquad =-\alpha \vep v_i^\vep\abs{a_{\cdot 3}^\vep}^3,\text{ on } \{z=0\}.
		\end{cases}
\end{equation}
The boundary condition \eqref{bdNavier} indicates that
\begin{equation}
	\label{340}
        -\alpha \vep \abs{a_{33}^\vep} v_\beta^\vep=\tilde\rho_0F^\vep_{\beta 3}+2\mu\vep(S_\eta(v^\vep))_{\beta 3}a_{33}^\vep,\,\,\text{ on } \{z=0\}, \,\beta=1,2.
\end{equation}
Before introducing the main results in this paper, let us review the related known results briefly. First, the viscoelastic system of equations is a fundamental system in complex fluids. The well-posedness of solutions to both the compressible and incompressible viscoelastic equations have been extensively studied, see \cite{LLZ2,Hu-Lin-Liu2018, L,LZ,HZ, HW2,Hu-Lin2016} and the references therein. Moreover, the vanishing viscosity limit is also one of the important problems in hydrodynamics and applied mathematics,  and has  attracted much attention from mathematicians. The vanishing viscosity limit of solutions to the Cauchy problem has been studied in many works; see \cite{CW,K,Mas,S} for the incompressible Navier-Stokes equations,  \cite{Cai,Sideris22} for the incompressible viscoelastic equations, and \cite{HL89,HWY-2} and their references   for the compressible Navier-Stokes equations.

When the vanishing viscosity limit problem is considered in a domain with a physical boundary,
it usually becomes more challenging due to the presence of boundary layers (e.g.  \cite{GGW,Ole,sch,von,WXY}). In particular,
if there exists a strong boundary layer,  the vanishing viscosity limit problem   is extremely hard due to the uncontrollability of the vorticity of boundary layer correctors.
However, when the no-slip boundary condition is replaced by the Navier-slip type boundary condition, the strong boundary layer usually disappears for the 
hydrodynamics equations, and the vanishing viscosity limit has been proved in \cite{WW, WXY1} for the compressible Navier-Stokes equations.  We refer the readers to \cite{B, IS, MR, XX,CLQ18} and their references   for the corresponding vanishing viscosity limit of the incompressible Navier-Stokes equations with the Navier-slip boundary conditions.
%
%
However, the research of the vanishing viscosity limit problem in a domain with a boundary under the  no-slip boundary condition is relatively underdeveloped.
To our best knowledge, the vanishing viscosity limit of the unsteady incompressible Navier-Stokes equations with the no-slip boundary condition  was only proved in the analytic function spaces or in the Gevrey classes; see  \cite{samm-caf1,samm-caf2, Mae, GMM} and the references therein for more details.
  For the incompressible magnetohydrodynamic (MHD) equations with the no-slip boundary condition on velocity, the existence and uniqueness of solutions to the MHD boundary layer equations and the convergence of the Prandtl boundary layer expansion in the Sobolev framework  were established in \cite{L-X-Y1, L-X-Y2} when the tangential component of magnetic field has a positive lower bound near the physical boundary initially; and it was proved in \cite{L-X-Y3} that the strong boundary layers do not happen in the vanishing viscosity limit for the incompressible non-resistive MHD system when
  the magnetic field is transversal to the physical boundary initially. This phenomenon was also observed for the compressible non-resistive MHD system and the MHD system with magnetic diffusion in \cite{CLX1,CLX2}.
  However, the vanishing viscosity limit for the compressible Navier-Stokes equations under the no-slip boundary condition in the half plane in the Sobolev spaces is still open, except for the linearized Navier-Stokes equations \cite{XY1} and  the case considered in the analytic function spaces \cite{WWZ}; and  even in the Gevrey settings the vanishing viscosity limit  is also unclear because of the appearance of strong boundary layers \cite{GGW,WXY}.

  In this paper, we consider the vanishing viscosity limit of the compressible viscoelastic equations in the half space under two different kinds of boundary conditions.
We show that the deformation tensor in viscoelasticity produces a significant effect on the vanishing viscosity process, and it can prevent the generation of strong boundary layers even under the no-slip boundary conditions on velocity. For this reason we are able to justify the vanishing viscoity limit of solutions to the compressible viscous flows governed by the viscoelastic equations \eqref{FVEE} in a domain with a physical boundary. Moreover, the main results in this paper also give two kinds of suitable boundary conditions for the well-posedness of initial boundary value problem of elastodynamic equations by the method of vanishing viscosity limit.
The similar effect of the deformation tensor was also observed in \cite{gu2020,gu2022} for the two-dimensional free boundary problem. The vanishing viscosity limit of solutions to the compressible viscoelastic equations with the no-slip boundary condition on velocity in the two-dimensional half plane was  considered in \cite{WX}, where all of desired energy estimates are derived in the Eulerian coordinates, and the working spaces are co-normal Sobolev spaces. Here, we use  the Lagrangian flow map method to derive all necessary energy estimates in the classical Sobolev spaces. Moreover, this vanishing viscosity limit problem in  both the two-dimensional and the  three-dimensional cases can be solved at the same time for the two different kinds of boundary conditions including the no-slip boundary condition  and Navier-slip boundary condition  in this paper.

\subsection{Main results}

Before stating the main results, we introduce the notations that will be used frequently throughout the paper.
 First  Einstein's summation convention will be adopted for repeated indices.   We denote by $\D_\tau^\alpha:=\D_x^{\alpha_1}D_y^{\alpha_2}\ (\alpha=(\alpha_1, \alpha_2),\ |\alpha|=\alpha_1+\alpha_2)$   the tangential derivatives, $\D_t$ denotes the temporal derivative and $\D$ also includes the normal derivative $\D_z$. The standard $L^p$ spaces, Sobolev spaces $H^m=W^{m,2}$ and $W^{m,p}$ on both the domain $\mathbb{R}^3_+$ and its boundary $\Gamma:=\{z=0\}$ are used. For the sake of simplicity, the norms for these function spaces defined on $\mathbb{R}^3_+$ are written as $\|{\cdot}\|_{L^p}, \|{\cdot}\|_{W^{m,p}}$ and $\|{\cdot}\|_{m}$, and the norms for these function spaces defined on $\Gamma$ are denoted by $|{\cdot}|_{L^p}, |{\cdot}|_{W^{m,p}}$ and $|{\cdot}|_{m}$.  For any real $s\geq 0$, the Hilbert space $H^s(\Gamma)$ and the related boundary norm $|{\cdot}|_s$ (or $|{\cdot}|_{H^s(\Gamma)}$) are defined by interpolation. The negative-order Sobolev space $H^{-s}(\Gamma)$ can be understood as duality: for real $s\geq 0, H^{-s}(\Gamma):=[H^{s}(\Gamma)]^\prime.$
The   norm of the space $L^p([0, t];\textbf{X})$ is denoted by $\|\cdot\|_{L^p_t(\textbf{X})}$.  We shall denote by $C$   a generic constant
that  depends only  on the domain $\Omega$ and the boundary $\Gamma$.   The notation $f\lesssim g$ means $f\leq Cg$.  We shall use $P(\cdot)$ to denote  a generic polynomial function of its arguments, which may vary from line to line, but the polynomial coefficients are generic constants $C$  independent of $\varepsilon$.

The aim of this paper is to establish the   well-posedness  and the vanishing viscosity limit of classical solutions to \eqref{FEL-final}  in a fixed time interval  independent of the viscosity $\vep\in(0,1]$. To obtain the uniform regularity estimates, we define the energy functional for the solutions to the viscoelastic fluid equations \eqref{FEL-final} as
\begin{align}\label{enE}
        \mathfrak{E}^\vep(t)=&\sum_{j=0}^m\norm{\partial_t^j\eta^\vep}_{m-j}^2(t)+\|\partial_t^j\partial_\tau^{m-j}(\nabla\eta^\vep, v^\vep, q^\vep)\|_{0}^2(t)+\sum_{j=0}^{m-1}\vep\|\partial_t^j\DD^2\eta^\vep\|_{m-1-j}^2(t)\nonumber\\
	&+\sum_{j=0}^m\int_{0}^{t}\norm{\D_t^j(\DD\eta^\vep, v^\vep,\vep\nabla v^\vep)}_{m-j}^2+\norm{\sqrt\vep\D_t^j\D_\tau^{m-j}\DD v^\vep}_{0}^2\,d\mathsf t.
\end{align}
We also require that the initial data $\D_t^\ell v^\vep(0),\D_t^\ell\eta^\vep (0), \D_t^\ell q^\vep(0)$, $\ell=0,\cdots,m-1$ satisfy the compatibility condition on the boundary. That is, for the no-slip boundary condition, the compatibility condition is
 \begin{equation}\label{compat-cond-d}
 	\D_t^\ell v^\vep(0)=0, \text{ on } \Gamma,
 \end{equation}
where $\D_t^\ell v^\vep(0),\D_t^\ell\eta^\vep(0), \D_t^\ell q^\vep(0)$ are defined by
\begin{align*}
	&\D_t^\ell v_i^\vep(0)=\tilde{\rho}_0^{-1}\D_t^{\ell-1}(-a_{ik}^\vep\D_kq^\vep+2\mu\vep a^\vep_{kl}\D_lS_\eta(v)^\vep_{ik}+\lambda\vep a^\vep_{ij}\D_j\Div_\eta (v)^\vep+\D_j(\tilde{\rho}_0^\vep\D_j\eta^\vep_i))\big|_{t=0},\\
	&\D_t^\ell\eta^\vep_i(0)=\D_t^{\ell-1}v_i^\vep(0),\quad \D_t^\ell q^\vep(0)=\tilde{\rho}_0^\gamma\D_t^\ell(J^\vep)^{-\gamma}\big|_{t=0}.
\end{align*}
While for the Navier-slip boundary condition, the compatibility condition becomes
\begin{equation}\label{compat-cond-n}
	\D_t^\ell v_3^\vep(0)=0,\quad  \D_t^\ell\left(\alpha \vep \abs{a_{33}^\vep} v_\beta^\vep+\tilde\rho_0\D_3\eta^\vep_\beta+2\mu\vep(S_\eta(v^\vep))_{\beta 3}a^\vep_{33}\right)|_{t=0}=0, \beta=1,2 \text{ on } \Gamma.
\end{equation}
The local well-posedness and uniform regularity estimates of solutions to the initial-boundary value problem \eqref{FEL-final}, \eqref{bdDirichlet} or \eqref{bdNavier} can be stated in the following theorem.

\begin{theorem}\label{Theorem 1}  Let $m\geq 4$. Suppose that the initial data $(\rho_0^\vep, \eta_0^\vep, v_0^\vep)$ satisfies the compatibility condition \eqref{compat-cond-d} (or the compatibility condition \eqref{compat-cond-n})  and   the following uniform bounds:
	\begin{align}
        &0< c_0\leq \tilde{\rho}_0^\vep\leq C_0,\label{rho-uni-bdd}\\ &\mathfrak E^\vep(0)\leq C_0
	\end{align}
for some generic constants $c_0$ and $C_0$. Then, there exists a $T_0>0$ independent of $\vep$, and a unique solution $(\eta^\vep,v^\vep)$ to the initial-boundary value problem \eqref{FEL-final} with \eqref{bdDirichlet} (or with   \eqref{bdNavier}) on the time interval $[0,T_0]$, such that
	\begin{equation}
		\sup_{t\in[0,T_0]}\mathfrak{E}^\vep(t)\leq C_1,
	\end{equation}
	where $C_1$ is a generic constant depending only on $c_0,C_0$.
\end{theorem}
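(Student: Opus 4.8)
The plan is to separate two tasks: (i) local solvability for each fixed $\vep\in(0,1]$, and (ii) the uniform-in-$\vep$ bound on $\mathfrak E^\vep$, which is the real content. For (i), note that for fixed $\vep>0$ the second equation in \eqref{FEL-final}, after expressing $(S_\eta v)^\vep_{ik}$ through $A^\vep\DD v^\vep$, is a uniformly elliptic second-order operator acting on $v^\vep$ with coefficient of size $\vep$; coupled to the ODE $\D_t\eta^\vep=v^\vep$ (which recovers $\eta^\vep=\eta_0^\vep+\int_0^tv^\vep$, hence $a^\vep,J^\vep,q^\vep$ as nonlinear functions of $\DD\eta^\vep$), this is a quasilinear parabolic system whose local solvability follows from a linearization-and-fixed-point argument in a high Sobolev norm, the compatibility conditions \eqref{compat-cond-d} or \eqref{compat-cond-n} guaranteeing admissible data. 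The lifespan so produced depends on $\vep$; to reach the $\vep$-independent $T_0$ and $C_1$ of the theorem one runs a continuation argument on top of the uniform estimate below, having first propagated the nondegeneracy $c_0/2\le\tilde\rho_0^\vep,\,J^\vep\le 2C_0$ from \eqref{rho-uni-bdd} through $\D_tJ^\vep=J^\vep\Div_\eta v^\vep$ (see \eqref{Jacobi's formula}).

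\textbf{Tangential--temporal energy estimate.} The heart is a uniform energy estimate exploiting three good structures. I would apply $\D_t^j\D_\tau^\alpha$ with $j+|\alpha|=m$ to the second equation of \eqref{FEL-final} and test against $\D_t^j\D_\tau^\alpha v_i^\vep=\D_t(\D_t^j\D_\tau^\alpha\eta_i^\vep)$. The elastic term $-\D_j(\tilde\rho_0^\vep\D_j\eta_i^\vep)$ then produces, after integration by parts and because $\tilde\rho_0^\vep$ is \emph{time-independent}, the coercive quantity $\hal\ddt\norm{\sqrt{\tilde\rho_0^\vep}\,\D_t^j\D_\tau^\alpha\DD\eta^\vep}_0^2$; the pressure term $a_{ik}^\vep\D_kq^\vep$, via the Piola identity \eqref{Piola} together with $q^\vep=(\tilde\rho_0^\vep)^\gamma(J^\vep)^{-\gamma}$ and \eqref{Jacobi's formula}, produces a positive pressure energy; and the viscous terms give the nonnegative dissipation $\vep\norm{\D_t^j\D_\tau^\alpha\DD v^\vep}_0^2$ accounted for in \eqref{enE}. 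The commutators between $\D_t^j\D_\tau^\alpha$ and the geometric coefficients $a^\vep,A^\vep$, together with the nonlinear dependence of $q^\vep$ on $\DD\eta^\vep$, yield remainders bounded by $P(\mathfrak E^\vep)$ via Sobolev product and commutator estimates, the hypothesis $m\ge4$ supplying the embeddings $H^{m-1}\hookrightarrow L^\infty$ (and e.g.\ $H^{m-2}\hookrightarrow W^{1,4}$) on $\mathbb R^3_+$ needed to distribute derivatives. Crucially, the boundary integrals created by integrating the viscous terms by parts vanish for the no-slip condition \eqref{bdDirichlet} (since $\D_t^j\D_\tau^\alpha v^\vep=0$ on $\Gamma$), and for the Navier-slip condition reduce, using \eqref{bdNavier}--\eqref{340}, to the sign-definite friction contribution together with elastic boundary terms controlled by the tangential energy.

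\textbf{Normal derivatives and closure.} The estimate above controls only derivatives along $\Gamma$ and in $t$; the remaining ingredients of $\mathfrak E^\vep$, in particular the full norms $\norm{\D_t^j\eta^\vep}_{m-j}$ and the $\vep$-weighted $\norm{\D_t^j\DD^2\eta^\vep}_{m-1-j}$, must be recovered from the equation. Here I would read the momentum equation as an \emph{elliptic} equation for $\eta^\vep$: since $\D_j(\tilde\rho_0^\vep\D_j\eta_i^\vep)=\tilde\rho_0^\vep\Delta\eta_i^\vep+\D_j\tilde\rho_0^\vep\,\D_j\eta_i^\vep$ is uniformly elliptic by \eqref{rho-uni-bdd}, one solves for the normal second derivative $\D_z^2\eta^\vep$ in terms of tangential second derivatives, $\tilde\rho_0^\vep\D_tv^\vep=\tilde\rho_0^\vep\D_t^2\eta^\vep$, $a^\vep\DD q^\vep$, and the $\vep$-viscous terms. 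Trading one normal derivative for already-controlled tangential and temporal quantities at each step and iterating, with the Dirichlet datum $\eta^\vep=\eta_0^\vep$ on $\Gamma$ from \eqref{bdDirichlet} (respectively the mixed datum from \eqref{bdNavier}), upgrades the tangential control to the full $H^{m-j}$ norms of $\D_t^j\eta^\vep$; then $v^\vep=\D_t\eta^\vep$ and $q^\vep=(\tilde\rho_0^\vep)^\gamma(J^\vep)^{-\gamma}$ supply $v^\vep$ and $q^\vep$, while the $\vep$-weighted second normal derivatives of $v^\vep$ follow from the parabolic part. Collecting the tangential estimate with the elliptic gain and closing the nonlinear loop yields $\mathfrak E^\vep(t)\le P(\mathfrak E^\vep(0))+\int_0^tP(\mathfrak E^\vep)\,d\mathsf s$, whence a $\vep$-independent $T_0$ and $C_1$.

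\textbf{Main obstacle.} I expect the principal difficulty to be the uniform-in-$\vep$ control of the normal derivatives of $v^\vep$ near $\Gamma$, i.e.\ precisely where, for compressible Navier--Stokes under no-slip, a strong boundary layer destroys uniform estimates. The resolution is structural: the elastic operator $\D_j(\tilde\rho_0^\vep\D_j\eta_i^\vep)$ furnishes $\vep$-independent elliptic control of $\eta^\vep$, and since $v^\vep=\D_t\eta^\vep$, the normal derivatives of $v^\vep$ inherit this control and do not degenerate as $\vep\to0$; this is the mechanism by which the deformation gradient prevents boundary-layer formation. Making it rigorous demands respecting the anisotropy built into \eqref{enE} — one may \emph{not} close in a full $H^m$ norm of $v^\vep$ — and checking that neither the commutators nor the elliptic recovery ever cost a factor $\vep^{-1}$. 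The residual issues, namely the Navier-slip boundary terms and the bookkeeping of nonlinear remainders, are technical but routine once this structure is in place.
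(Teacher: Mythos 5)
Your overall architecture --- fixed-$\vep$ local solvability, a tangential--temporal energy estimate obtained by applying $\D_t^j\D_\tau^{\alpha}$ with $j+|\alpha|=m$ and testing against $\D_t^j\D_\tau^{\alpha}v^\vep$, then recovery of normal derivatives from the momentum equation, closed by continuation --- is the paper's architecture, and your tangential step (coercive elastic term, positive pressure energy via $q^\vep=(\tilde\rho_0^\vep)^\gamma(J^\vep)^{-\gamma}$ and \eqref{Jacobi's formula}, vanishing boundary terms under \eqref{bdDirichlet}, friction terms under \eqref{bdNavier}--\eqref{340}) matches the paper's Lemma on tangential estimates. The gap is in your normal-derivative step, and it is not a bookkeeping issue.

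You propose to read the momentum equation elliptically, solving for $\D_z^2\eta^\vep$ in terms of ``tangential second derivatives, $\tilde\rho_0^\vep\D_t v^\vep$, $a^\vep\DD q^\vep$, and the $\vep$-viscous terms'', and then iterating. Two of the terms you place on the right-hand side are of exactly the same differential order as the unknown and cannot be treated as data. First, $a^\vep_{ik}\D_k q^\vep$ contains $-\gamma(\tilde\rho_0^\vep)^{\gamma}(J^\vep)^{-\gamma-1}a^\vep_{i3}a^\vep_{r3}\D_3^2\eta^\vep_r$, i.e.\ second normal derivatives of $\eta^\vep$ with an order-one coefficient; moving this to the right produces an estimate of the form $X\le A+CX$ with $C$ not small, which cannot close. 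It must stay on the left, where --- and this is precisely the paper's key observation --- it combines with the elastic term into the positive-definite matrix $\mathcal{A}_{ij}=\tilde\rho_0^\vep J^\vep\delta_{ij}+\gamma\bigl(\tilde\rho_0^\vep (J^\vep)^{-1}\bigr)^{\gamma}a^\vep_{i3}a^\vep_{j3}$ of \eqref{e5} and \eqref{normal-eqn}; the good sign of the pressure contribution is what makes this possible. Second, and more seriously, the $\vep$-viscous terms contain $\mu\vep\, a^\vep_{k3}a^\vep_{k3}\D_3^2v^\vep_i+(\mu+\lambda)\vep\, a^\vep_{i3}a^\vep_{j3}\D_3^2v^\vep_j$: two normal derivatives of $v^\vep$, which is exactly the quantity whose uniform control is at stake, so your remark that these ``follow from the parabolic part'' is circular. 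Since $v^\vep=\D_t\eta^\vep$, the correct reading of \eqref{normal-eqn} is not elliptic but as a damped ODE in time, $\mathcal{A}X+\vep B\,\D_t X=\text{(controlled data)}$ with $X=\bar{\D}^{\beta}\D_3^2\eta^\vep$ and $B$ positive semidefinite. The paper squares this combination and integrates in time; the cross terms $\vep\int\mathcal{A}X\cdot B\,\D_tX$ are rewritten as $\ddt\int\vep(\cdots)|X|^2$ modulo commutators, and it is this structure that yields simultaneously $\D_3^2\eta^\vep\in L^2_tL^2$, $\sqrt\vep\,\D_3^2\eta^\vep\in L^\infty_tL^2$, and $\vep\,\D_3^2v^\vep\in L^2_tL^2$ --- the three pieces of \eqref{enE} your elliptic iteration cannot reach. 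Your ``main obstacle'' paragraph gestures at the right mechanism ($v^\vep$ ``inherits'' control because $v^\vep=\D_t\eta^\vep$), but an $L^2$ bound on $\D_3^2\eta^\vep$ says nothing about its time derivative; the control of $\vep\,\D_3^2v^\vep$ comes out of the algebraic ODE structure above, not from differentiating a bound in time.
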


\begin{rmk}
	The regularity of solutions implies that the flow map $\eta$ is at least Lipschitz continuous, thus one can recover the corresponding classical solutions to \eqref{FVEE} in the Eulerian coordinates.
\end{rmk}
Based on the uniform regularity estimates achieved in Theorem \ref{Theorem 1}, 
we can justify the vanishing viscosity limit of solutions to the initial-boundary value problem for the compressible viscoelastic flow and obtain the local existence of classical solutions to the related initial-boundary value problem for the elastodynamic equations with two different kinds of boundary conditions.

\begin{theorem}\label{vanishing viscosity1}
	Under the assumptions of Theorem \ref{Theorem 1}, if we further assume  that there exists $(\rho_0,\eta_0,v_0)$ such that
	\begin{equation}
		\lim\limits_{\vep\rightarrow 0}\|\tilde{\rho}^\vep_0-\rho_0\|_0+\|\eta_0^\vep-\eta_0\|_0+\|v_0^\vep-v_0\|_0=0.
	\end{equation}
Then, there exists $(\eta,v)(t,\cdot)$ on the time interval $[0,T_0]$ such that
\begin{equation}
	\sup_{t\in[0,T_0]}\mathfrak{E}(t)\leq C_1,
\end{equation}
and
\begin{equation}
	\lim\limits_{\vep\rightarrow 0}\sup_{t\in[0,T_0]}(\|\eta^\vep(t)-\eta(t)\|_{m}+\|v^\vep(t)-v(t)\|_{m-1})=0.
\end{equation}
Moreover, $(\eta,v)$ is the unique classical solution to the initial-boundary value problem of the elastodynamic equations \eqref{FEL-final-2} with the following boundary condition:
\begin{align}
v=0 \qquad (\text{or} \quad  v_i a_{i3}=0,\qquad \tilde\rho_0F_{i3}\abs{a_{\cdot 3}}^2-\tilde{\rho_0}J a_{i3} =0)
\end{align}
on $\Gamma$.
\end{theorem}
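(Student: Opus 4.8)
The plan is to combine the uniform bounds of Theorem \ref{Theorem 1} with a compactness argument to extract the inviscid limit, to identify it as a solution of \eqref{FEL-final-2}, to pin it down by uniqueness, and finally to upgrade the convergence to the top-order norms, which is where the difficulty concentrates.

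First I would extract limits. The bound $\sup_t\mathfrak E^\vep(t)\le C_1$ forces $\eta^\vep$ to be bounded in $L^\infty([0,T_0];H^m)$, $v^\vep=\D_t\eta^\vep$ in $L^\infty([0,T_0];H^{m-1})$, and $\D_tv^\vep=\D_t^2\eta^\vep$ in $L^\infty([0,T_0];H^{m-2})$. Weak-$*$ compactness yields, along a subsequence, $\eta^\vep\rightharpoonup\eta$ and $v^\vep\rightharpoonup v=\D_t\eta$ at these levels, while the Aubin--Lions--Simon lemma (using the control of $\D_tv^\vep$) upgrades this to strong convergence $\eta^\vep\to\eta$ in $C([0,T_0];H^{m-1}_{\mathrm{loc}})$ and $v^\vep\to v$ in $C([0,T_0];H^{m-2}_{\mathrm{loc}})$. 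Weak lower semicontinuity of the functional then gives $\sup_t\mathfrak E(t)\le C_1$, so the limit has the asserted regularity.

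Next I would pass to the limit in \eqref{FEL-final} and in the boundary conditions \eqref{bdDirichlet}/\eqref{bdNavier}. All viscous contributions carry an explicit factor $\vep$, and in the energy they appear only with $\sqrt\vep$ weights or inside $\vep$-weighted time integrals, so they vanish as $\vep\to0$; the coefficients $A^\vep,J^\vep,a^\vep,q^\vep$ are smooth functions of $\DD\eta^\vep$, which converges strongly in $H^{m-1}_{\mathrm{loc}}$ (recall $m\ge4$), so the product and composition estimates let every remaining term converge to its inviscid counterpart. This identifies $(\eta,v)$ as a solution of \eqref{FEL-final-2} with boundary datum $v=0$ (resp.\ $v_ia_{i3}=0$ and $\tilde\rho_0F_{i3}\abs{a_{\cdot 3}}^2-\tilde\rho_0Ja_{i3}=0$), the terms $\alpha\vep v$ and $2\mu\vep S_\eta(v)$ dropping out. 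To make the whole family converge, I would prove uniqueness for \eqref{FEL-final-2} by a relative-energy (modulated-energy) estimate that exploits the convexity of the elastic and pressure energies: writing the system as the quasilinear wave equation $\tilde\rho_0\D_t^2\eta_i-\D_j(\tilde\rho_0\D_j\eta_i)+a_{ik}\D_kq=0$, the difference of two solutions is controlled in $\norm{\D_t(\cdot)}_0^2+\norm{\DD(\cdot)}_0^2$ by Grönwall with no loss of derivatives. The same estimate, now comparing $(\eta^\vep,v^\vep)$ with $(\eta,v)$ and carrying the $O(\sqrt\vep)$ viscous forcing, gives strong convergence at the base level; interpolating against the uniform $H^m$ bound promotes it to strong convergence in every $H^s$ with $s<m$, uniformly in $t$.

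The main obstacle is the top-order strong convergence $\norm{\eta^\vep-\eta}_m+\norm{v^\vep-v}_{m-1}\to0$ claimed in the theorem, since a difference estimate at this level loses a derivative: the coefficient differences are multiplied by second derivatives of the solutions, which the base energy does not control. I would instead invoke the Hilbert-space principle that weak convergence together with convergence of norms implies strong convergence. The norm convergence I would obtain by passing to the limit in the energy identity underlying \eqref{enE}: the dissipative terms tend to zero, while the flux and commutator terms converge through the strong sub-top convergence established above, so that $\mathfrak E^\vep(t)\to\mathfrak E(t)$ for each $t$; combined with the weak top-order convergence and lower semicontinuity this forces $\norm{\eta^\vep(t)}_m\to\norm{\eta(t)}_m$ and $\norm{v^\vep(t)}_{m-1}\to\norm{v(t)}_{m-1}$, hence strong convergence, with uniformity in $t$ coming from the time-equicontinuity supplied by the $\D_t$-bounds. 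Matching every term of the $\vep$-energy identity with its inviscid limit, and in particular controlling the top-order initial energy, is the step where the bulk of the work lies.
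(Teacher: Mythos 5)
Your proposal follows, in its core, exactly the paper's own argument: the uniform bounds of Proposition \ref{uniform estimates} give $\eta^\vep$ bounded in $L^\infty(0,T_0;H^m)$ and $\D_t\eta^\vep=v^\vep$ bounded in $L^\infty(0,T_0;H^{m-1})$ with $\D_tv^\vep$ bounded one level below; Aubin--Lions--Simon compactness then yields a subsequence with $\eta^{\vep_n}\to\eta$ in $C([0,T_0];H^{m-1})$ and $v^{\vep_n}\to v$ in $C([0,T_0];H^{m-2})$; the viscous terms carry explicit factors of $\vep$ and drop out in the limit, identifying $(\eta,v)$ as a solution of \eqref{FEL-final-2} with the stated boundary conditions; and uniqueness of the classical solution upgrades subsequential to whole-family convergence. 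You go beyond the paper in two places: you sketch a proof of uniqueness (relative-energy/Gr\"onwall for the quasilinear wave formulation), where the paper merely invokes uniqueness without proof, and you attack the top-order convergence $\sup_t(\|\eta^\vep-\eta\|_m+\|v^\vep-v\|_{m-1})\to0$ head-on. It is worth noting that the paper's own proof is silent on this last point: its compactness argument delivers convergence exactly one derivative below the level claimed in the theorem, and nothing more.

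On that extra step, however, your plan has a genuine gap. The mechanism ``weak convergence plus convergence of norms implies strong convergence'' requires $\mathfrak E^\vep(t)\to\mathfrak E(t)$, and your route to this --- passing to the limit in the energy identity behind \eqref{enE} --- requires in particular convergence of the \emph{initial} top-order energies. But the theorem only assumes $\|\tilde\rho_0^\vep-\rho_0\|_0+\|\eta_0^\vep-\eta_0\|_0+\|v_0^\vep-v_0\|_0\to0$ together with the uniform bound $\mathfrak E^\vep(0)\le C_0$. By interpolation this gives convergence of the data in every $H^s$ with $s<m$, but not at top order: perturbations of the form $\eta_0^\vep=\eta_0+\ell_\vep^{\,m-3/2}\phi(x/\ell_\vep)$ with $\ell_\vep\to0$ and $\phi\in C_c^\infty$ are uniformly bounded in $H^m$, converge to $\eta_0$ in $L^2$ (indeed in every $H^s$, $s<m$), yet their $H^m$-distance to $\eta_0$ stays of order one. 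Hence $\mathfrak E^\vep(0)\to\mathfrak E(0)$ is simply not available under the stated hypotheses, and the energy-identity argument cannot close; you flagged this as ``where the bulk of the work lies,'' but it is not merely laborious --- it would require strengthening the hypothesis to convergence of the initial data in the full energy space (well-prepared data at top order). The paper avoids, rather than resolves, this difficulty: its proof establishes only the $C([0,T_0];H^{m-1})\times C([0,T_0];H^{m-2})$ convergence, so on this point your proposal is more honest about the problem than the source, but neither closes it.
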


\begin{rmk}
In fact, Theorem \ref{vanishing viscosity1} 
provides two different kinds of boundary conditions for the well-posedness of the initial-boundary value problem of the elastodynamic equations \eqref{FEL-final-2} by the vanishing viscosity limit method. In the Eulerian coordinates, they can be written as $u=0$ (or  $u\cdot n=0$, $(\rho F F^{\text{T}}n)_\tau=0$).
\end{rmk}

\subsection{Comments on the main results and strategies of the proofs}
We now explain the main difficulties and the strategies of the proofs of  the main theorems.
As mentioned above,  when the vanishing viscosity limit is considered in a domain that has a physical boundary, the uniform estimates of normal derivatives of solutions with respect to the small viscosity parameter $\varepsilon$ are difficult to achieve. On the one hand, in general it is impossible to obtain these uniform estimates due to the appearance of strong boundary layers 
for the solutions to both the compressible and incompressible Navier-Stokes equations under the no-slip boundary condition on velocity.
On the other hand, when the uniform estimates of normal derivatives of solutions with respect to the small viscosity parameter $\varepsilon$ are derived, it shows that the strong boundary layer should disappear. In our results we find that if the deformation gradient in viscoelasticity is taken into account, even though the no-slip boundary condition is given on the velocity, the uniform estimates of normal derivatives for the solutions to the compressible viscoelastic fluid equations can still   be achieved provided that the deformation gradient does not degenerate. In fact, the combination of deformation gradient and the pressure gives a good positive structure for the normal derivatives of the flow map $\eta$, which is one of the main observations of this paper. 
This structure helps us to control the normal derivatives uniformly with respect to the viscosity.

Briefly, by using the formula of $q$ and \eqref{FEL-final}, we have
\begin{equation}\label{e4}
	-\mathcal{A}_{ij}\D_3^2\eta_j^\vep-\mu\vep a^\vep_{k3}a^\vep_{k3}\D_3^2v_i^\vep-(\mu+\lambda)\vep a_{i3}^\vep a_{j3}^\vep \D_3^2v_j^\vep= \mathcal{F}_i+\mathcal{G}_i,
\end{equation}
where $\mathcal{F}_i, \mathcal{G}_i$ only contain at most one normal derivatives of $\eta^\vep, \vep v^\vep$,  and
\begin{equation}\label{e5}
	\mathcal A={\tilde\rho}_0^\vep J^\vep(I+\gamma(\tilde\rho_0^\vep)^{\gamma-1}(J^\vep)^{-\gamma-1}n^\vep\otimes n^\vep), 
\end{equation}
where $n^\vep=a_{\cdot 3}^\vep$ denotes the third column of $a^\vep$.
Thus, with $0<c_0\leq \tilde\rho_0^\vep, J^\vep \leq C_0$, it is straightforward to justify that $\mathcal{A}$ is positive definite and show that $\D_3^2\eta^\vep$ can be controlled by the lower order normal derivatives of $\eta^\vep$ uniformly with respect to $\vep$.

Consequently, according to the above arguments, the results stated in Theorem \ref{Theorem 1} show that the strong boundary layers will disappear when the non-degenerate deformation gradient is involved in the viscous flow. Moreover, our method is also valid for the initial-boundary value problem for the compressible viscoelastic  equations with the Navier-slip type condition, which is also studied in this paper, and similar results can be obtained as stated in Theorem \ref{Theorem 1} and Theorem \ref{vanishing viscosity1}.


The remainder of the paper is organized as follows. In Section 2, we present some preliminaries and elementary lemmas. Section 3 is devoted to deriving the uniform energy estimates of solutions to the initial-boundary value problem \eqref{FEL-final} and \eqref{bdNavier}.
In Section 4, we establish the uniform estimates of solutions to the initial-boundary value problem \eqref{FEL-final} and \eqref{bdDirichlet}. Based on the uniform estimates established in Sections 3 and 4, we prove the main Theorems   \ref{Theorem 1}  and \ref{vanishing viscosity1}  in Section 5.

\section{Preliminaries}

In this section, we recall some basic inequalities and   identities and   estimates.

\begin{lemma}\label{jj}
	Let $g\in H^1([0,t];L^2)$. Then, we have
	\begin{equation}\label{L-infL-2}
		\|g(t)\|_0^2\ls t\|\D_tg\|_{L^2_t(L^2)}^2+\|g(0)\|_0^2.
	\end{equation}
\end{lemma}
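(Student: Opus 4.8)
The plan is to prove the elementary inequality $\|g(t)\|_0^2 \lesssim t\|\partial_t g\|_{L^2_t(L^2)}^2 + \|g(0)\|_0^2$ for $g \in H^1([0,t];L^2)$ by writing $g(t)$ as the integral of its time-derivative starting from the initial value, and then applying the Cauchy--Schwarz inequality in the time variable. This is a pointwise-in-space estimate that I then integrate over the spatial domain $\mathbb{R}^3_+$.

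First I would use the fundamental theorem of calculus in time. Since $g \in H^1([0,t];L^2)$, for almost every $x \in \mathbb{R}^3_+$ we may write
\begin{equation*}
g(s,x) = g(0,x) + \int_0^s \partial_\mathsf{t} g(\mathsf t, x)\, d\mathsf t.
\end{equation*}
Evaluating at the endpoint $s=t$ and using the elementary inequality $(a+b)^2 \le 2a^2 + 2b^2$, I obtain
\begin{equation*}
|g(t,x)|^2 \le 2|g(0,x)|^2 + 2\left(\int_0^t \partial_\mathsf{t} g(\mathsf t, x)\, d\mathsf t\right)^2.
\end{equation*}
The key step is then to bound the last term by Cauchy--Schwarz in the time integral, which yields
\begin{equation*}
\left(\int_0^t \partial_\mathsf{t} g(\mathsf t, x)\, d\mathsf t\right)^2 \le t \int_0^t |\partial_\mathsf{t} g(\mathsf t, x)|^2\, d\mathsf t.
\end{equation*}

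Next I would integrate the resulting pointwise inequality over $x \in \mathbb{R}^3_+$. Using Fubini's theorem to exchange the order of the spatial integration and the time integration on the right-hand side, the double integral of $|\partial_\mathsf{t} g|^2$ becomes exactly $\|\partial_\mathsf{t} g\|_{L^2_t(L^2)}^2$, while the remaining term becomes $\|g(0)\|_0^2$. This produces
\begin{equation*}
\|g(t)\|_0^2 \le 2t\|\partial_\mathsf{t} g\|_{L^2_t(L^2)}^2 + 2\|g(0)\|_0^2,
\end{equation*}
which is the claimed inequality with the implicit constant absorbed into $\lesssim$.

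This lemma is routine and I do not anticipate a genuine obstacle; the only point requiring mild care is the justification that the fundamental theorem of calculus applies pointwise in the $L^2$-valued sense, which is guaranteed by the assumption $g \in H^1([0,t];L^2)$ (equivalently, $g$ has an absolutely continuous representative as a Bochner-integrable $L^2$-valued function whose weak derivative is $\partial_\mathsf{t} g$). One could equally argue directly at the level of the $L^2$-valued Bochner integral, writing $g(t) = g(0) + \int_0^t \partial_\mathsf{t} g\, d\mathsf t$ in $L^2$ and applying the triangle inequality together with the Cauchy--Schwarz bound $\|\int_0^t \partial_\mathsf{t} g\, d\mathsf t\|_0 \le \sqrt{t}\,\|\partial_\mathsf{t} g\|_{L^2_t(L^2)}$, avoiding any appeal to the pointwise representative altogether.
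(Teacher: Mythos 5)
Your proof is correct and is essentially the same as the paper's: both write $g(t)=g(0)+\int_0^t \partial_t g\,d\mathsf t$ via the fundamental theorem of calculus and then bound the time integral by Cauchy--Schwarz (the paper's ``H\"older'') together with the triangle inequality in $L^2$ (the paper's ``Minkowski''). Your closing remark about arguing directly with the $L^2$-valued Bochner integral is a clean way to phrase the same computation, not a different method.
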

\begin{proof}
	Since $g\in H^1([0,t];L^2)$, it follows that $g(t,x)\in C([0,t];L^2)$. The elementary theorem in calculus implies that
	\begin{equation*}
		g(t,x)=g(0, x)+\int_{0}^{t}g_t(s,x)ds.
	\end{equation*}
Thus, after using the H\"{o}lder and Minkowski inequalities, we arrive at \eqref{L-infL-2}.
\end{proof}

%
%

\subsection{Trace estimates}
The following trace estimates will be used.
\begin{lemma} For any function $\theta\in H^1$, one has
\begin{equation}\label{tre}
\abs{\theta}_0^2\ls \norm{\theta}_0^2+\norm{\theta}_0\norm{\nabla \theta}_0.
\end{equation}
\end{lemma}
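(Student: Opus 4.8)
The plan is to prove the trace inequality \eqref{tre} by an integration-by-parts argument in the normal direction, after first reducing to smooth functions by density. Since $C_c^\infty(\overline{\mathbb{R}^3_+})$ is dense in $H^1(\mathbb{R}^3_+)$ and the trace map $\theta\mapsto\theta|_\Gamma$ is continuous from $H^1$ into $L^2(\Gamma)$, it suffices to establish the bound for smooth $\theta$ with compact support (equivalently, rapid decay) and then pass to the limit: both sides of \eqref{tre} are continuous under $H^1$ convergence, so the estimate survives. First I would fix a smooth bounded vector field $\mathbf{N}$ with $\mathbf{N}\cdot n=1$ on $\Gamma$; in the half space one may simply take the constant field $\mathbf{N}=n=(0,0,-1)$, for which $\mathbf{N}\cdot n=1$ and $\Div\mathbf{N}=0$.

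For such a $\theta$ I would apply the divergence theorem to the vector field $\mathbf{W}=\abs{\theta}^2\mathbf{N}$. Since $\mathbf{W}$ decays at infinity, the only boundary contribution comes from $\Gamma$, and the product rule $\nabla(\abs{\theta}^2)=2\theta\,\nabla\theta$ gives
\[
\abs{\theta}_0^2=\int_\Gamma\abs{\theta}^2(\mathbf{N}\cdot n)\,dS=\int_{\mathbb{R}^3_+}\Div(\abs{\theta}^2\mathbf{N})\,dx=\int_{\mathbb{R}^3_+}\left(\abs{\theta}^2\Div\mathbf{N}+2\theta\,\mathbf{N}\cdot\nabla\theta\right)dx.
\]
Bounding the right-hand side by the Cauchy--Schwarz inequality, with $\norm{\Div\mathbf{N}}_{L^\infty}$ and $\norm{\mathbf{N}}_{L^\infty}$ finite, yields
\[
\abs{\theta}_0^2\le\norm{\Div\mathbf{N}}_{L^\infty}\norm{\theta}_0^2+2\norm{\mathbf{N}}_{L^\infty}\norm{\theta}_0\norm{\nabla\theta}_0\ls\norm{\theta}_0^2+\norm{\theta}_0\norm{\nabla\theta}_0,
\]
which is exactly \eqref{tre}. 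In the half-space case with $\mathbf{N}=n$ the divergence term vanishes and one even obtains the sharper $\abs{\theta}_0^2\le 2\norm{\theta}_0\norm{\nabla\theta}_0$; I would nonetheless keep the stated form, since it is the robust version that also survives localization and flattening of a curved boundary.

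The only genuinely delicate point is the justification of the divergence theorem on the \emph{unbounded} domain $\mathbb{R}^3_+$, namely that no contribution arises as $z\to\infty$. I expect this to be the main (and really the sole) obstacle, and I would dispose of it precisely through the density reduction above: for compactly supported smooth $\theta$ the identity is elementary, and the extension to general $\theta\in H^1$ follows at once from the continuity of the trace operator and of the two norms appearing in \eqref{tre}. Apart from this, the argument uses only the product rule and Cauchy--Schwarz, so no further difficulty is anticipated.
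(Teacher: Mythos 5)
Your proof is correct. Note that the paper itself gives no proof of this lemma at all: it is stated in Section 2 as a standard trace estimate (only the subsequent lemma on $|\partial_\tau\omega|_{-1/2}$ carries a citation), so there is no argument of the authors to compare against; your divergence-theorem computation in the normal direction, followed by Cauchy--Schwarz and a density argument, is precisely the standard proof, and in the flat half-space it indeed yields the sharper bound $\abs{\theta}_0^2\le 2\norm{\theta}_0\norm{\nabla\theta}_0$, of which \eqref{tre} is a weaker consequence.

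One small logical remark: invoking the continuity of the trace operator $H^1\to L^2(\Gamma)$ to pass to the limit is mildly circular, since that continuity is usually \emph{established} by this very estimate. The cleaner phrasing is that the inequality for $\theta\in C_c^\infty(\overline{\mathbb{R}^3_+})$ shows the restriction map is bounded from $(C_c^\infty,\norm{\cdot}_{H^1})$ to $L^2(\Gamma)$, hence extends uniquely by density to a bounded operator on all of $H^1$, and the extended operator satisfies \eqref{tre} by continuity of both sides along approximating sequences. This is a rewording of what you did, not a gap in substance.
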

\begin{lemma}\label{pereaf}
	Denote the dual space of $H^{\frac{1}{2}}(\Gamma)$ by $H^{\frac{1}{2}}(\Gamma)^\prime$. Then the following inequality holds true
	\begin{equation}
	\label{peadv}
	\abs{\partial_\tau \omega}_{-\frac{1}{2}}:=\abs{\partial_\tau \omega}_{H^{\frac{1}{2}}(\Gamma)^\prime}\leq C\abs{\omega}_{\frac{1}{2}}, \; \forall \omega\in H^{\frac{1}{2}}(\Gamma).
	\end{equation}
\end{lemma}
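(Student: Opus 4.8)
The plan is to reduce the estimate to a pointwise bound on Fourier multipliers, exploiting that $\Gamma = \{z=0\}$ is isometric to $\mathbb{R}^2$ and hence carries a global Fourier transform. Recall that for every real $s$ the norm on $H^s(\Gamma)=H^s(\mathbb{R}^2)$ has the equivalent representation
\[
\abs{f}_s^2 \simeq \int_{\mathbb{R}^2} (1+\abs{\xi}^2)^s\,\abs{\hat f(\xi)}^2\, d\xi,
\]
where $\hat f$ is the tangential Fourier transform of $f$; moreover, since $H^{-s}(\Gamma)$ is defined as the dual $[H^s(\Gamma)]'$ with respect to the $L^2(\Gamma)$ pairing, Plancherel's theorem shows that the weight $(1+\abs{\xi}^2)^{-s}$ realizes precisely this dual norm. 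It therefore suffices to prove the inequality for the Fourier-weighted norms, and the interpolation/duality conventions of the paper are recovered up to a fixed equivalence constant absorbed into $C$.

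First I would unfold the dual norm by its definition: writing $\partial_\tau$ for a tangential derivative $\partial_{x_j}$ ($j=1,2$),
\[
\abs{\partial_\tau \omega}_{-\frac12}
= \sup_{0\neq \phi \in H^{1/2}(\Gamma)} \frac{\abs{\langle \partial_\tau \omega,\, \phi\rangle_\Gamma}}{\abs{\phi}_{1/2}}.
\]
Because $\Gamma=\mathbb{R}^2$ has no boundary, integration by parts produces no boundary contribution, so $\langle \partial_\tau \omega, \phi\rangle_\Gamma = -\langle \omega, \partial_\tau \phi\rangle_\Gamma$. Using Plancherel's identity together with $\widehat{\partial_\tau \phi}(\xi)= i\xi_j\hat\phi(\xi)$, this pairing becomes $-\int_{\mathbb{R}^2}\hat\omega(\xi)\,\overline{i\xi_j\hat\phi(\xi)}\,d\xi$.

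The key step is then a weighted Cauchy--Schwarz splitting of the integrand. Inserting the factorization $1=(1+\abs{\xi}^2)^{1/4}(1+\abs{\xi}^2)^{-1/4}$, I would estimate
\[
\abs{\langle \partial_\tau\omega, \phi\rangle_\Gamma}
\leq \Big(\int (1+\abs{\xi}^2)^{1/2}\abs{\hat\omega}^2\, d\xi\Big)^{1/2}
\Big(\int (1+\abs{\xi}^2)^{-1/2}\abs{\xi_j}^2\abs{\hat\phi}^2\, d\xi\Big)^{1/2}.
\]
The first factor is exactly $\abs{\omega}_{1/2}$. For the second, the elementary pointwise bound $(1+\abs{\xi}^2)^{-1/2}\abs{\xi_j}^2 \leq (1+\abs{\xi}^2)^{-1/2}(1+\abs{\xi}^2)=(1+\abs{\xi}^2)^{1/2}$, which uses only $\abs{\xi_j}^2\leq\abs{\xi}^2\leq 1+\abs{\xi}^2$, shows the second factor is at most $\abs{\phi}_{1/2}$. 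Dividing by $\abs{\phi}_{1/2}$ and taking the supremum yields $\abs{\partial_\tau \omega}_{-1/2}\leq C\abs{\omega}_{1/2}$, the constant $C$ collecting the norm-equivalence constants and, if $\partial_\tau$ is read as the full tangential gradient $(\partial_x,\partial_y)$, the summation over the two components.

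This argument is essentially routine, so I do not expect a genuine obstacle; the only points needing care are the consistency of the three norm conventions in play (the interpolation definition of $H^{1/2}(\Gamma)$, the duality definition of $H^{-1/2}(\Gamma)$, and the Fourier-weighted norms) and the fact that integration by parts on the boundaryless manifold $\Gamma=\mathbb{R}^2$ generates no boundary term. The actual content is the single inequality $(1+\abs{\xi}^2)^{-1/2}\abs{\xi_j}^2\leq(1+\abs{\xi}^2)^{1/2}$, which encodes that a tangential derivative lowers Sobolev regularity by exactly one order and hence maps $H^{1/2}(\Gamma)$ boundedly into $H^{-1/2}(\Gamma)$.
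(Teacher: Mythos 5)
Your proof is correct, but it follows a different route from the paper: the paper does not prove the lemma at all, it simply cites Lemma~8.5 of Coutand--Shkoller \cite{DS_10}, where the estimate is obtained in a way that does not rely on a global Fourier transform (duality plus the two endpoint mappings $\partial_\tau\colon H^1(\Gamma)\to L^2(\Gamma)$ and $\partial_\tau\colon L^2(\Gamma)\to H^1(\Gamma)'$, interpolated to the $H^{1/2}$ level), an argument that works on a general, possibly curved, boundary. Your argument instead exploits the special geometry of this paper, namely that $\Gamma=\{z=0\}\cong\mathbb{R}^2$, so that the dual norm can be computed exactly as a Fourier-weighted norm; the whole lemma then reduces, as you say, to the pointwise multiplier bound $(1+\abs{\xi}^2)^{-1/2}\abs{\xi_j}^2\le(1+\abs{\xi}^2)^{1/2}$ after a weighted Cauchy--Schwarz splitting. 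Each step checks out: the identification of $[H^{1/2}(\mathbb{R}^2)]'$ with the $(1+\abs{\xi}^2)^{-1/2}$-weighted norm under the $L^2$ pairing is standard, the integration by parts (or, equivalently, defining $\widehat{\partial_\tau\omega}=i\xi_j\hat\omega$ as a tempered distribution and justifying by density of smooth compactly supported functions) is legitimate since $\Gamma$ has no boundary, and the constant correctly absorbs norm-equivalence factors and the sum over the two tangential directions. What your approach buys is a short, self-contained, fully explicit proof tailored to the half-space setting; what the cited interpolation argument buys is generality, since it applies verbatim to compact or curved boundaries where no global Fourier transform is available.
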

\begin{proof}
	The proof can be found in \cite[Lemma 8.5]{DS_10}.
\end{proof}
\subsection{Korn's inequality}
We refer to \cite{MR} for the following Korn-type inequality.
\begin{lemma}
	For any $f\in H^1(\Omega)$, it holds that
	\begin{equation}\label{Korn's ineq}
            \|\DD f\|_0^2\ls P(\|\DD\eta\|_{2}^2)(\|{S}_{\eta}(f)-\dfrac{1}{3}\Div_\eta(f)I\|_0^2+\|f\|_0^2).
	\end{equation}
\end{lemma}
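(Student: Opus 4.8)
The plan is to reduce this variable-coefficient, deformed-gradient Korn inequality to the classical constant-coefficient deviatoric Korn inequality by pushing $f$ forward through the flow map. Set $g := f\circ(\eta)^{-1}$ on the deformed domain $\Omega_t := \eta(\Omega)$. Using the chain rule one checks the key identity $\DD_y g = (\DD f)(\DD\eta)^{-1} = \DD_\eta f$ (equivalently $\DD f = (\DD_\eta f)\,\DD\eta$), so that $S_\eta(f)$ and $\Div_\eta(f)$ are, after composition with $\eta^{-1}$, exactly the ordinary Eulerian symmetric gradient $Sg$ and divergence $\Div g$ of $g$, and the trace-free combination $S_\eta(f)-\frac{1}{3}\Div_\eta(f) I$ transforms to $Sg - \frac{1}{3}(\Div g) I$. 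This turns the left-hand side $\|\DD f\|_0$ into a quantity comparable to $\|\DD_y g\|_{L^2(\Omega_t)}$ modulo factors of $\DD\eta$, and the right-hand side into the deviatoric symmetric gradient of $g$.

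The flat ingredient I would then invoke is the deviatoric Korn inequality on $\Omega_t$,
\[
\|\DD g\|_{L^2(\Omega_t)}^2 \ls \|Sg - \tfrac13(\Div g) I\|_{L^2(\Omega_t)}^2 + \|g\|_{L^2(\Omega_t)}^2 ,
\]
whose proof rests on the second-order Korn identity $\D_k\D_j g_i = \D_j (Sg)_{ik} + \D_k (Sg)_{ij} - \D_i (Sg)_{jk}$, controlling $\DD(\DD g)$ in $H^{-1}$ by $\DD g$, together with the fact that in dimension three the kernel of the trace-free symmetric gradient (the conformal Killing fields) is finite dimensional; a Lions/Ne\v{c}as-type argument then upgrades the $H^{-1}$ estimate and absorbs the kernel through the lower-order term $\|g\|_0$. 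I would then change variables back to the fixed reference domain $\Omega$ via $dy = J\,dx$; the bounds $0<c_0\le J\le C_0$ — which follow from the hypothesis $0<c_0\le\tilde\rho_0^\vep\le C_0$ together with $f^\vep=\tilde\rho_0^\vep (J^\vep)^{-1}$ — make the $L^2(\Omega_t)$ and $L^2(\Omega)$ norms of all pulled-back quantities comparable, while $\DD f=(\DD_\eta f)\DD\eta$ with $\|\DD\eta\|_{L^\infty}\ls\|\DD\eta\|_2$ (Sobolev embedding $H^2\hookrightarrow L^\infty$) converts $\|\DD_y g\|$ back into $\|\DD f\|_0$. Collecting the Jacobian factors and the powers of $\|\DD\eta\|_{L^\infty}$ and $\|(\DD\eta)^{-1}\|_{L^\infty}$ produces the polynomial $P(\|\DD\eta\|_2^2)$.

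The main obstacle is that the deviatoric Korn constant on the moving image domain $\Omega_t=\eta(\Omega)$ a priori depends on the geometry of $\Omega_t$, and it must be made uniform in $\vep$, controlled by $\|\DD\eta\|_2$ alone. I would avoid this difficulty by carrying out the argument \emph{entirely in the fixed reference domain} rather than transporting it to $\Omega_t$: one proves the second-order Korn identity directly for the variable-coefficient operator $S_\eta$, whose coefficients are built from $A=(\DD\eta)^{-\top}$, and estimates the resulting commutators between $\D$ and $A$, which involve $\DD^2\eta$ and are therefore bounded in terms of $\|\DD\eta\|_2$. The fixed-domain Lions lemma then applies with a universal constant, and the only $\eta$-dependence enters through these commutators and the ellipticity constant of $A A^\top$, both polynomial in $\|\DD\eta\|_2$. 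The delicate point in this route is to verify that every commutator term is genuinely of lower differential order in $f$, so that it can be absorbed into the left-hand side or bounded by $\|f\|_0$ times a power of $\|\DD\eta\|_2$; this, rather than the flat Korn inequality itself, is where the real work lies.
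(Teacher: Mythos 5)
The paper offers no proof of this lemma at all: it is stated with a pointer to \cite{MR}, so your attempt is being measured against a black-box citation rather than an argument. Your core reduction---push $f$ forward through $\eta$ so that $S_\eta(f)-\tfrac{1}{3}\Div_\eta(f)I$ becomes the ordinary trace-free symmetric gradient of $g=f\circ\eta^{-1}$, apply the flat trace-free Korn inequality, and pull back using $c_0\le J\le C_0$ and $\|\nabla\eta\|_{L^\infty}\lesssim\|\nabla\eta\|_2$---is the right idea. But the ``main obstacle'' around which you organize the rest of the proof is not present in this paper: under both boundary conditions $v_3=0$ on $\Gamma$, and $\eta_0$ maps $\mathbb{R}^3_+$ onto itself, so under the a priori assumptions $\eta(t)$ is a diffeomorphism of the half space \emph{onto itself}. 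Hence $\Omega_t=\eta(\Omega)=\mathbb{R}^3_+$ for every $t$, the flat Korn constant is the one fixed constant of the half space, no uniformity-in-geometry issue arises, and your variable-coefficient detour is unnecessary. (A small additional correction: the bounds on $J$ do not follow from $f^\vep=\tilde\rho_0^\vep(J^\vep)^{-1}$---that reasoning is circular, since the density is an unknown---they come from \eqref{H1} and the a priori assumption \eqref{A-priori-assum}; note also that bounding $A=(\nabla\eta)^{-\top}$, hence the constant $P(\|\nabla\eta\|_2^2)$, tacitly requires this nondegeneracy.)

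The genuine gaps are in your account of the flat ingredient, and they carry over verbatim to the fixed-domain variant you actually propose to run. First, the second-order identity $\partial_k\partial_j g_i=\partial_j(Sg)_{ik}+\partial_k(Sg)_{ij}-\partial_i(Sg)_{jk}$ controls $\nabla^2 g$ by derivatives of the \emph{full} symmetric gradient; to use it with only $D(g):=Sg-\tfrac{1}{3}(\Div g)I$ you must in addition control $\Div g$ in $L^2$, and that is precisely the nontrivial content of the trace-free version. It requires a separate ingredient, e.g.\ the identity $\Delta\,\Div g=\tfrac{3}{2}\Div\Div D(g)$ (this is where $n\ge 3$ enters; in two dimensions the inequality is false, the kernel being infinite-dimensional), together with boundary analysis. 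This is where the real work lies---not in the commutators you flag: since $A=(\nabla\eta)^{-\top}$ comes from a diffeomorphism, the vector fields $\partial_{\eta_i}=A_{ij}\partial_j$ commute exactly, so the second-order identity holds exactly in $\eta$-derivatives, and the coefficient manipulations cost only $\|A\|_{L^\infty}$ and $\|\nabla A\|_{L^3}$, both polynomial in the stated quantities. Second, ``absorbing the kernel through the lower-order term'' by finite dimensionality of the conformal Killing fields is a Rellich-compactness device for \emph{bounded} domains; on $\Omega=\mathbb{R}^3_+$ it fails as stated. It can be repaired: tile $\mathbb{R}^3_+$ by congruent unit cubes, apply the bounded-domain trace-free Korn inequality on each cube with the single translation-invariant constant, and sum---this localization is exactly what the $\|f\|_0^2$ term on the right-hand side permits---but some such step must be supplied. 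Alternatively, simply citing the flat trace-free Korn inequality on $\mathbb{R}^3_+$ as known is no worse than what the paper itself does; with that, and with the observation that $\eta$ preserves the half space, your pushforward argument closes.
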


\subsection{Geometric identities}
By the definitions and the chain rule, differentiating $J^\vep$, $A^\vep$ and $a^\vep$, we obtain
\begin{equation}\label{Geo-iden-1}
\D J^\vep=a_{ij}^\vep\D_j\D\eta_i^\vep,\quad \D A^\vep_{kj}=-A_{kl}^\vep\D_l \D\eta_i^\vep A_{ij}^\vep,\quad \D a^\vep_{kj}=a_{li}^\vep\D_i\D\eta_lA^\vep_{kj}-a^\vep_{kl}\D_l\D\eta^\vep_iA^\vep_{ij}.
\end{equation}


\section{Viscosity-Independent a priori Estimates}

In this section, we focus on deriving the $\vep$-independent estimates of smooth solutions to \eqref{FEL-final} under two different types boundary conditions, which are stated in the following proposition.
\begin{proposition}\label{uniform estimates}
	Let $(\eta^\vep,v^\vep)$ be a solution to \eqref{FEL-final} with the no-slip boundary condition \eqref{bdDirichlet} or the Navier-slip boundary condition \eqref{bdNavier}. Then there exists a time $T$ independent of $\vep$ such that
\begin{equation}
\sup_{t\in[0,T]}\mathfrak{E}^\vep(t)\leq 2M_0,
\end{equation}
where $M_0=P(\mathfrak{E}^\vep(0))$.
\end{proposition}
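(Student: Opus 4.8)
The plan is to derive a closed estimate $\mathfrak E^\vep(t)\le P(\mathfrak E^\vep(0))+\int_0^t P(\mathfrak E^\vep(s))\,ds$ with all constants independent of $\vep\in(0,1]$, and then close it by a continuity argument: setting $T^\ast=\sup\{t:\mathfrak E^\vep(s)\le 2M_0\ \forall s\le t\}$ and choosing $T$ so small that $T\,P(2M_0)\le M_0$ forces $\mathfrak E^\vep\le 2M_0$ on $[0,T]$, with $T$ depending only on $M_0=P(\mathfrak E^\vep(0))$ and not on $\vep$. The basic energy identity is obtained by testing the momentum equation in \eqref{FEL-final} against $v^\vep=\D_t\eta^\vep$. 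Since $\tilde\rho_0^\vep$ is independent of time, the inertial term yields $\hal\ddt\norm{\sqrt{\tilde\rho_0^\vep}v^\vep}_0^2$; the elastic term $-\D_j(\tilde\rho_0^\vep\D_j\eta_i^\vep)$, after integration by parts, yields $\hal\ddt\norm{\sqrt{\tilde\rho_0^\vep}\DD\eta^\vep}_0^2$ up to a boundary integral; the pressure term, using the Piola identity \eqref{Piola}, Jacobi's formula \eqref{Jacobi's formula} and $q^\vep=(\tilde\rho_0^\vep)^\gamma(J^\vep)^{-\gamma}$, becomes the time derivative of a nonnegative internal-energy functional; and the two viscous terms furnish the dissipation $\mu\vep\int J^\vep|S_\eta(v^\vep)|^2+\lambda\vep\int J^\vep|\Div_\eta v^\vep|^2$, which is coercive by the constraints $\mu>0$, $2\mu+3\lambda>0$ together with Korn's inequality \eqref{Korn's ineq}. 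This is the wave-type structure that drives the whole scheme.

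To reach top order I would apply each operator $\D_t^j\D_\tau^{m-j}$ ($0\le j\le m$) to the momentum equation and repeat the same pairing. Because these derivatives are purely tangential or temporal, they commute with the trace on $\Gamma$ and generate no boundary-layer corrector; the boundary integrals produced by integration by parts either vanish under the no-slip condition \eqref{bdDirichlet}, or, under the Navier-slip condition, reduce via \eqref{340} to a sign-definite damping term $-\alpha\vep\int_\Gamma|a_{33}^\vep|\,|\D_t^j\D_\tau^{m-j}v_\beta^\vep|^2$ plus lower-order factors. The commutators of $\D_t^j\D_\tau^{m-j}$ with the coefficients $\tilde\rho_0^\vep$, $a^\vep$, $A^\vep$ are estimated using the geometric identities \eqref{Geo-iden-1}, Moser-type product inequalities and the trace bound \eqref{tre}; every such remainder is of order $\le m$ and bounded by $P(\mathfrak E^\vep)$, with the genuinely top-order pieces absorbed into the $\sqrt\vep$-weighted dissipation $\norm{\sqrt\vep\,\D_t^j\D_\tau^{m-j}\DD v^\vep}_0$. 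Since $q^\vep$ is an explicit function of $J^\vep=\det\DD\eta^\vep$, its contribution to \eqref{enE} follows from the chain rule and \eqref{Geo-iden-1}. This controls the entire tangential-temporal part of the energy.

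The crucial and hardest step is the uniform control of the normal derivatives, where a boundary layer would ordinarily obstruct the estimate. Here I would invoke the algebraic identity \eqref{e4}, obtained by isolating the $\D_3^2$ terms in the momentum equation. Because $\mathcal F_i,\mathcal G_i$ contain at most one normal derivative of $(\eta^\vep,\vep v^\vep)$, and the matrix $\mathcal A$ in \eqref{e5} is positive definite with upper and lower bounds uniform in $\vep$ (thanks to $0<c_0\le\tilde\rho_0^\vep, J^\vep\le C_0$), inverting $\mathcal A$ expresses $\D_3^2\eta^\vep$ and $\vep\D_3^2v^\vep$ in terms of already-controlled quantities with no loss of powers of $\vep$. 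Applying tangential and temporal derivatives to \eqref{e4} and iterating then recovers the full mixed norms $\norm{\D_t^j\eta^\vep}_{m-j}$ and the $\vep$-weighted normal bounds $\vep\norm{\D_t^j\DD^2\eta^\vep}_{m-1-j}$ in \eqref{enE}; the corresponding norms of $v^\vep$ follow from $v^\vep=\D_t\eta^\vep$ and the momentum equation. Integrating in time and using Lemma \ref{jj} to pass from the dissipation to pointwise-in-time control then yields the desired closed inequality.

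The main obstacle is exactly this normal-derivative step: one must carefully verify that everything hidden in $\mathcal F_i$ and $\mathcal G_i$ really carries at most one normal derivative and the correct power of $\vep$, so that the positive-definite inversion of $\mathcal A$ remains uniform as $\vep\to0$—this is the precise mechanism by which the non-degenerate deformation gradient suppresses the strong boundary layer. A secondary difficulty is the bookkeeping of the nonlinear boundary integrals in the Navier-slip case \eqref{bdNavier}, where the trace terms involving $a_{\cdot 3}^\vep$ and $F_{i3}^\vep$ must be shown to combine into the sign-definite damping above plus harmless lower-order remainders.
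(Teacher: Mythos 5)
Your overall architecture matches the paper's proof: basic energy identity from testing with $v^\vep$, tangential--temporal estimates via $\D_t^j\D_\tau^{m-j}$ with commutators absorbed into the $\sqrt\vep$-weighted dissipation, normal-derivative control through the positive-definite matrix $\mathcal A$, pressure control from $q^\vep=(\tilde\rho_0^\vep)^\gamma (J^\vep)^{-\gamma}$, and a continuity/bootstrap argument in time. However, there is a genuine gap in your normal-derivative step, which is the heart of the whole result.

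You claim that since $\mathcal A$ is uniformly positive definite, ``inverting $\mathcal A$ expresses $\D_3^2\eta^\vep$ \emph{and} $\vep\D_3^2v^\vep$ in terms of already-controlled quantities.'' This cannot work as pure algebra: \eqref{e4} is a \emph{single} vector equation in which $\D_3^2\eta^\vep$ and $\vep\D_3^2v^\vep$ appear simultaneously, and since $v^\vep=\D_t\eta^\vep$ it is really a damped ODE in time of the form $\mathcal A\,w+\vep B\,\D_t w=-(\mathcal F+\mathcal G)$ with $w=\D_3^2\eta^\vep$, not an algebraic relation that can be solved for two unknowns at once. Inverting $\mathcal A$ only trades one unknown for the other; it does not produce the pointwise-in-time bound on $\sqrt\vep\,\D_3^2\eta^\vep$ nor the time-integrated bounds on $\D_3^2\eta^\vep$ and $\vep\D_3^2v^\vep$ that the energy \eqref{enE} requires, and treating the $\vep\D_3^2 v^\vep$ terms as ``already controlled'' would be circular, since their control is exactly what this lemma must establish. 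The paper's actual mechanism is different and essential: it \emph{squares} the left-hand side of \eqref{e4} (after applying $\bar\D^\beta$) and integrates over $\Omega$; the cross terms $\mathcal A_{ij}\bar\D^\beta\D_3^2\eta_j\cdot\vep\,\bar\D^\beta\D_3^2 v_i$ are rewritten, using $v^\vep=\D_t\eta^\vep$, as exact time derivatives of $\vep$-weighted quadratic forms in $\bar\D^\beta\D_3^2\eta^\vep$ plus commutator-type remainders. Integrating in time then simultaneously yields $\|\sqrt\vep\,\bar\D^\beta\D_3^2\eta^\vep\|_0^2(t)$ pointwise and $\int_0^t(\|\bar\D^\beta\D_3^2\eta^\vep\|_0^2+\|\vep\bar\D^\beta\D_3^2v^\vep\|_0^2)$, after which one iterates in the number of normal derivatives. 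Your proposal is missing this time-derivative (or equivalently, damped-ODE energy) structure, without which the inversion of $\mathcal A$ alone does not close the estimate.

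A minor secondary point: in the Navier-slip case you assert the boundary terms reduce to a \emph{sign-definite} damping term. The paper does not rely on any sign; after using \eqref{340} it simply bounds both the quadratic boundary term and the commutator term by trace estimates and Young's inequality, e.g.\ $\bigl|\alpha\vep\int_\Gamma|a_{33}^\vep||\bar\D^m v^\vep|^2\bigr|\lesssim P(\sqrt{\mathfrak F^\vep})\bigl(\vep^2\|\bar\D^m v^\vep\|_0^2+\vep^2\|\bar\D^m\nabla v^\vep\|_0^2\bigr)$, which is absorbed by the viscous dissipation. Your sign claim is unverified (and unnecessary), so you should either prove it or follow the paper's route and estimate the term outright.
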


Note that,  since
\begin{equation}\label{H1}
\tilde{\rho}_0^\vep\geq c_0,~~ \frac{1}{c_0}>J^\vep_0\geq c_0>0
\end{equation}
for some $c_0>0$, we can assume that there exist a sufficiently small $T_\vep$ such that, for $t\in[0,T_\varepsilon]$,
\begin{equation}\label{A-priori-assum}
|J^\vep(t)-J^\vep_0|\leq \frac{1}{8}c_0,\quad|\D_j\eta_i^\vep(t)-\D_j\eta_{0i}^\vep|\leq\frac{1}{8}c_0.
\end{equation}
Note that the lower order terms in $\mathfrak E^\vep$ can be estimated directly. Indeed, using Lemma \ref{jj}, we have
for $t\in [0,T]$ with $T\le T_{\vep}$,
\begin{equation}\label{etaest}
\mathfrak F^\vep(t):=\sum_{j=0}^m\norm{\D_t^j(\eta^\vep,\vep\nabla\eta^\vep)}_{m-j}^2(t)\ls M_0+T\mathfrak E^\vep(T).
\end{equation}

The proof of Proposition \ref{uniform estimates} can be divided into the proofs of several lemmas. We will prove the proposition   for the Navier-slip boundary condition  in details first, and then   extend the proof to the no-slip boundary condition case.
For simplicity of notation, we only keep the superscript $\vep$ 
in the statements of lemmas but omit it in the proofs without causing any confusion.



\subsection{Basic energy estimates}
\begin{lemma}\label{basic-energy-est}
	For any $t\in[0,T_\vep]$, 
	\begin{equation}\label{Basic energy}
	\|v^\vep(t)\|_0^2+\|\DD\eta^\vep(t)\|_0^2+\|Q(f^\vep)(t)\|_{L^1}\lesssim M_0+T_\vep,
	\end{equation}
	where $$Q(f)=\int_1^f q(\mu)\mu^{-2}\,d\mu.$$
\end{lemma}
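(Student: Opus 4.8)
The plan is to derive a basic energy identity by testing the momentum equation in \eqref{FEL-final} against $v^\vep$ and integrating over $\mathbb{R}^3_+$. First I would multiply the second equation of \eqref{FEL-final} by $v_i^\vep$, sum over $i$, and integrate in space. The time-derivative term $\tilde\rho_0^\vep\D_tv_i^\vep\cdot v_i^\vep$ immediately produces $\frac12\ddt\int\tilde\rho_0^\vep\abs{v^\vep}^2$, giving the kinetic energy. For the elastic term $-\D_j(\tilde\rho_0^\vep\D_j\eta_i^\vep)$, integrating by parts and using $\D_t\eta_i^\vep=v_i^\vep$ from the first equation turns $\int\tilde\rho_0^\vep\D_j\eta_i^\vep\D_jv_i^\vep$ into $\frac12\ddt\int\tilde\rho_0^\vep\abs{\DD\eta^\vep}^2$, which is the deformation-gradient energy. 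The pressure term $a_{ik}^\vep\D_kq^\vep$ should, after integration by parts and using the Piola identity \eqref{Piola} together with $f^\vep=\tilde\rho_0^\vep(J^\vep)^{-1}$ from \eqref{f-eqn} and Jacobi's formula \eqref{Jacobi's formula}, reorganize into $\ddt\int Q(f^\vep)$ with $Q$ as defined; this is the standard way the pressure work becomes an exact time derivative of the internal energy.

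Next I would handle the viscous terms $-2\mu\vep a_{kl}^\vep\D_l(S_\eta v)_{ik}^\vep-\lambda\vep a_{ij}^\vep\D_j(\Div_\eta v^\vep)$. Integrating by parts in the spatial variable, these yield a nonnegative dissipation of the form $2\mu\vep\int\abs{S_\eta v^\vep}^2+\lambda\vep\int\abs{\Div_\eta v^\vep}^2$ (up to the sign conventions and the $a^\vep$ factors absorbed via $\D_{\eta_i}=A_{i\ell}^\vep\D_\ell$), which has a favorable sign under the physical constraints $\mu>0$, $2\mu+3\lambda>0$ and can simply be discarded when deriving an upper bound. The boundary contributions from all the integrations by parts must be controlled: for the Navier-slip case the boundary term from the viscous and elastic stresses is exactly matched by the boundary condition \eqref{bdNavier}/\eqref{340}, producing a favorable friction term $-\alpha\vep\int_\Gamma\abs{a_{33}^\vep}\abs{v_\tau^\vep}^2$ that is again of good sign; for the no-slip case $v^\vep=0$ on $\Gamma$ kills every boundary integral outright.

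After assembling these pieces the energy identity reads, schematically,
\begin{equation*}
\ddt\left(\tfrac12\int\tilde\rho_0^\vep\abs{v^\vep}^2+\tfrac12\int\tilde\rho_0^\vep\abs{\DD\eta^\vep}^2+\int Q(f^\vep)\right)+\text{(nonnegative dissipation)}=\mathcal{R},
\end{equation*}
where the remainder $\mathcal{R}$ collects commutator-type terms arising because $a^\vep,\tilde\rho_0^\vep$ are not constant and because $Q(f^\vep)$ is not exactly the integrated pressure work. I would bound $\mathcal{R}$ by $P(\mathfrak E^\vep(t))$ using the a priori assumptions \eqref{A-priori-assum} (which keep $J^\vep$ and $\DD\eta^\vep$ close to their initial values, hence $a^\vep$ bounded and $J^\vep$ bounded below) and \eqref{rho-uni-bdd}; crucially, these bounds are uniform in $\vep$ since the dissipation carries the $\vep$ weights and is discarded. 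Integrating in time from $0$ to $t$ and using $\mathfrak E^\vep(0)\le C_0$ yields the stated bound, with $Q(f^\vep)\ge 0$ ensuring the internal-energy term controls $\norm{Q(f^\vep)}_{L^1}$.

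The main obstacle I anticipate is the bookkeeping of the pressure term: showing that $-\int a_{ik}^\vep\D_kq^\vep\,v_i^\vep$ really collapses to $\ddt\int Q(f^\vep)$ requires carefully combining the Piola identity, the explicit relation $f^\vep=\tilde\rho_0^\vep(J^\vep)^{-1}$, and the definition $Q(f)=\int_1^f q(\mu)\mu^{-2}\,d\mu$, whose $\mu^{-2}$ weight is precisely engineered to reconcile $\D_t J^\vep=J^\vep\Div_\eta v^\vep$ with the density-dependent pressure $q(f^\vep)$; a sign error or a missed Jacobian factor here would spoil the exactness. The remaining work—integration by parts, discarding good-sign dissipation, and applying the uniform bounds—is routine and I would not grind through it in detail.
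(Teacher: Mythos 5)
Your overall strategy coincides with the paper's: test the momentum equation with $v^\vep$, integrate by parts using the Piola identity \eqref{Piola}, and recognize the kinetic, elastic and internal energies as exact time derivatives (the paper's \eqref{inner}). One small correction to your bookkeeping: at this zeroth order the identity is \emph{exact} — the pressure work is precisely $\frac{d}{dt}\int_\Omega\tilde\rho_0 Q(f^\vep)$ (note the weight $\tilde\rho_0$, harmless by \eqref{rho-uni-bdd}), and since $\tilde\rho_0$ is time-independent no commutator remainders of the kind you anticipate occur; your remainder $\mathcal{R}$ consists of boundary terms only. For the no-slip condition \eqref{bdDirichlet} all boundary integrals vanish and your argument is complete and identical to the paper's.

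The gap is in the Navier-slip case. You assert that the boundary contribution is a friction term $-\alpha\vep\int_\Gamma\abs{a_{33}^\vep}\abs{v^\vep_\tau}^2$ of ``good sign'' that, like the dissipation, can simply be discarded. That is not what the paper's Lagrangian boundary condition yields. Using \eqref{340}, the boundary term computed in \eqref{bdi} is $R_b=-\alpha\vep\int_\Gamma\abs{a_{33}}\abs{v}^2$, and since the tested equation reads $\frac{1}{2}\frac{d}{dt}\int_\Omega\tilde\rho_0\abs{v}^2+R_e+R_b=0$, this term enters the energy balance as a \emph{source} $+\alpha\vep\int_\Gamma\abs{a_{33}}\abs{v}^2$ on the right-hand side, i.e. with the unfavorable sign. (Your intuition comes from the Eulerian condition \eqref{VKBCN}, for which friction is indeed dissipative; but the lemma is proved from the Lagrangian condition \eqref{340}, and it is that formula which fixes the sign of $R_b$ — as written it is opposite to what you assume.) Consequently the term cannot be dropped: the paper bounds it by the trace inequality \eqref{tre} and Young's inequality, $\abs{\int_\Gamma\alpha\vep\abs{a_{33}}\abs{v}^2}\ls\norm{v}_0^2+\vep^2\norm{\nabla v}_0^2$ (this is \eqref{314}), and then absorbs the $\vep^2\norm{\nabla v}_0^2$ piece into the retained viscous dissipation via Korn's inequality \eqref{Korn's ineq} — so the dissipation cannot be discarded either, contrary to your plan. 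The surviving $\int_0^t\norm{v}_0^2$ is exactly what produces the $T_\vep$ in the stated bound $M_0+T_\vep$. To close your proof you must either carry out this trace/Young/Korn absorption, or justify a sign claim that contradicts \eqref{340}; the robust route is to estimate the term's absolute value as the paper does.
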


\begin{proof}
	Taking the $L^2(\Omega)$ inner product on the second equation in $\eqref{FEL-final}$ with $v_i$ gives
	\begin{align*}
		&\frac{1}{2}\frac{d}{dt}\int_\Omega\tilde{\rho}_0|v|^2 dx+\int_\Omega a_{ik}\D_kqv_i dx-2\mu\vep\int_\Omega a_{kl}\D_l(S_\eta v)_{ik}v_idx\\
	&-\lambda\vep\int_\Omega a_{ij}\D_j(\Div_\eta v)v_i-\int_\Omega\D_j(\tilde{\rho}_0\D_j\eta_i) v_i dx=0.
	\end{align*}
	Using the integration by parts and   Piola's identity \eqref{Piola}, we have
	\begin{equation}
		\label{infd}
		\begin{split}
			&\int_\Omega a_{ik}\D_kqv_idx-2\mu\vep\int_\Omega a_{kl}\D_l(S_\eta v)_{ik}v_idx-\lambda\vep\int_\Omega a_{ij}\D_j(\Div_\eta v)v_i-\int_\Omega\D_j(\tilde{\rho}_0\D_j\eta_i) v_idx\\
	&=\underbrace{-\int_\Omega a_{ik}\D_kv_iqdx+2\mu\vep\int_\Omega(S_\eta v)_{ik}a_{kl}\D_lv_idx+\lambda\vep\int_\Omega J(\Div_\eta v)^2dx+\int_\Omega\tilde{\rho}_0\D_j\eta_i\D_jv_i dx}_{R_e}\\
    &\quad \underbrace{-\int_\Gamma q v_ia_{i3}+2\mu\vep\int_\Gamma (S_\eta v)_{ik}a_{k3}v_i+\lambda\vep\int_\Gamma a_{i3}v_i \Div_\eta v+\int_\Gamma\tilde\rho_0\D_3\eta_iv_i}_{R_b}.
		\end{split}
	\end{equation}
For $R_e$ in \eqref{infd}, by   \eqref{Jacobi's formula} and \eqref{bdNavier}, one has
	\begin{equation}
		\label{inner}
		\begin{split}
			&R_e=-\int_\Omega J_tqdx+\frac{1}{2}\frac{d}{dt}\int_\Omega\tilde{\rho}_0|\nabla \eta|^2dx+2\mu\vep\int_\Omega J|S_\eta(v)|^2\,dx+\lambda\vep\int_\Omega J|\Div_\eta v|^2\,dx
			\\
			&=\int_\Omega\tilde{\rho}_0f_tf^{-2}q(f)dx+\frac{1}{2}\frac{d}{dt}\int_\Omega\tilde{\rho}_0|\nabla \eta|^2dx+2\mu\vep\int_\Omega J|S_\eta(v)|^2\,dx+\lambda\vep\int_\Omega J|\Div_\eta v|^2\,dx
			\\
			&=\frac{d}{dt}\int_\Omega\tilde{\rho}_0Q(f)dx+\frac{1}{2}\frac{d}{dt}\int_\Omega\tilde{\rho}_0|\nabla \eta|^2dx+2\mu\vep\int_\Omega J|S_\eta(v)|^2\,dx+\lambda\vep\int_\Omega J|\Div_\eta v|^2\,dx,
		\end{split}
	\end{equation}
	where $Q(f)=\int_{1}^{f}q(\mu)\mu^{-2}d\mu$.

	For $R_b$, using the boundary condition \eqref{bdNavier} and \eqref{340}, we arrive at
	\begin{equation}
		\label{bdi}
		\begin{split}
		R_b=\sum_{\beta=1,2}\int_\Gamma \tilde\rho_0F_{i3}+2\mu\vep(S_\eta(v^\vep))_{\beta 3}a_{33} v_\beta^\vep=-\int_\Gamma \alpha \vep\abs{a_{33}}\abs{v}^2.
		\end{split}
	\end{equation}
	Therefore, we conclude that
	\begin{align*}
	\frac{d}{dt}\left(\frac{1}{2}\int_\Omega\tilde{\rho}_0(|v|^2+|\nabla\eta|^2+2Q(f))\right)&+2\mu\vep\int_\Omega J|S_\eta(v)|^2\,dx+\lambda\vep\int_\Omega J|\Div_\eta v|^2\,dx\\-\int_\Gamma \alpha\vep \abs{a_{33}}|v|^2=0.
	\end{align*}
	Then, integrating it in time yields
	\begin{align*}
    &\left(\frac{1}{2}\int_\Omega\tilde{\rho}_0(|v|^2+|\nabla\eta|^2+2Q(f))\right)(t)+2\mu\vep\int_0^t\int_\Omega J\left(|S_\eta(v)|^2-\dfrac{1}{3}|\Div_\eta v|^2\right)\,dx\\&+\left(\dfrac{2}{3}\mu+\lambda\right)\vep\int_0^t\int_\Omega J|\Div_\eta v|^2\,dx-\int_0^t\int_\Gamma \alpha\vep\abs{a_{33}}\abs{v}^2
    =\frac{1}{2}\int_\Omega\tilde{\rho}_0(|v_0|^2+|\nabla\eta_0|^2+2Q(f_0)).\nonumber
	\end{align*}
By the trace estimates \eqref{tre} and Young's inequality, we have
\begin{equation}
	\label{314}
	\abs{\int_\Gamma \alpha\vep\abs{a_{33}}\abs{v}^2}\ls \norm{v}_0^2+\vep^2\norm{\nabla v}_0^2.
\end{equation}
Moreover, since $\mu>0$, $2\mu+3\lambda>0$, and
    \begin{equation*}
            |S_\eta(f)|^2-\dfrac{1}{3}|\Div_\eta f|^2=|S_\eta(f)-\dfrac{1}{3}\Div_\eta f I|^2,
    \end{equation*}
 then   \eqref{Basic energy} follows from
    Korn's inequality,  together with \eqref{H1}, \eqref{A-priori-assum} and \eqref{314}.
\end{proof}

\subsection{Estimates of $q$}
Before deriving the higher-order estimates of solutions, we first prove the following key lemma for the estimate of pressure.
\begin{lemma}\label{est-J-q}
	For any $t\in[0,T_\vep]$, $m\geq 3$, 
	\begin{align}
		\sum_{j=0}^{m-1}\int_0^t\|\partial_t^j q^\vep\|_{m-j}^2\ls P\left(\sup_{t\in[0,T_\vep]}\mathfrak{E}^\vep(t)\right).\label{m-J-q}
	\end{align}
\end{lemma}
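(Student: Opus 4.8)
The plan is to exploit the explicit algebraic representation $q^\vep=(\tilde\rho_0^\vep)^\gamma(J^\vep)^{-\gamma}$ coming from \eqref{f-eqn} together with $q=f^\gamma$, and the transport identity
\[
\D_t q^\vep=-\gamma q^\vep\Div_\eta v^\vep=-\gamma q^\vep A^\vep_{k\ell}\D_\ell v^\vep_k,
\]
which follows from the mass equation $\D_t f^\vep+f^\vep\Div_\eta v^\vep=0$ and $q^\vep=(f^\vep)^\gamma$. By the a priori bounds \eqref{H1} and \eqref{A-priori-assum} the Jacobian $J^\vep=\det\DD\eta^\vep$ stays bounded below by $\tfrac{7}{8}c_0>0$, so $s\mapsto s^{-\gamma}$ is smooth on the relevant range and $q^\vep$ is a smooth function of $\DD\eta^\vep$ with coefficient $(\tilde\rho_0^\vep)^\gamma$. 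Since $m\geq 3$ gives the Sobolev embedding $H^{m-1}\hookrightarrow L^\infty$, the quantities $A^\vep,J^\vep,q^\vep,\DD v^\vep$ and their low-order derivatives are bounded in $L^\infty$ in terms of $\mathfrak E^\vep$, which makes all the product and composition estimates below tame. I would then argue by induction on the number of time derivatives $j$.

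For the base case $j=0$ I estimate $\|q^\vep\|_m$ by Moser-type composition and product inequalities. Because $J^\vep$ is cubic in the entries of $\DD\eta^\vep$ and $(\cdot)^{-\gamma}$ is smooth away from zero, the top-order contribution to $\partial^m q^\vep$ has the form $(\text{bounded})\cdot\partial^m\DD\eta^\vep$, while all remaining terms are products of $L^\infty$-bounded low-order factors with derivatives of order $\leq m$ of $\DD\eta^\vep$ and $\tilde\rho_0^\vep$. Integrating in time, the top-order term is controlled by $\int_0^t\|\DD\eta^\vep\|_m^2$, which is exactly one of the integral terms of $\mathfrak E^\vep$, and the lower terms are bounded by $T_\vep\,P(\sup\mathfrak E^\vep)$ since they involve only $\|\eta^\vep\|_m$ and $\|\tilde\rho_0^\vep\|_m\leq C_0$. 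This yields $\int_0^t\|q^\vep\|_m^2\ls P(\sup_{[0,T_\vep]}\mathfrak E^\vep)$.

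For the inductive step $1\leq j\leq m-1$ I apply $\partial_t^{j-1}\partial^\beta$ with $|\beta|\leq m-j$ to the transport identity and expand by Leibniz,
\[
\partial_t^j\partial^\beta q^\vep=-\gamma\,\partial_t^{j-1}\partial^\beta\big(q^\vep A^\vep_{k\ell}\D_\ell v^\vep_k\big),
\]
the key being to track which factor carries the top-order derivative. When it falls on $v^\vep$ the resulting factor $\partial_t^{j-1}\partial^\beta\D v^\vep$ has $L^2$ norm bounded by $\|\partial_t^{j-1}v^\vep\|_{m-j+1}$, hence after time integration by the integral term $\int_0^t\|\partial_t^{j-1}v^\vep\|_{m-j+1}^2$ of $\mathfrak E^\vep$; when it falls on $A^\vep$ (equivalently on $\DD\eta^\vep$, since $A^\vep$ is a smooth function of $\DD\eta^\vep$) we obtain $\int_0^t\|\partial_t^{j-1}\DD\eta^\vep\|_{m-j+1}^2$, again an integral term of $\mathfrak E^\vep$; when it falls on $q^\vep$ the factor carries one fewer time derivative and is handled by the induction hypothesis, the remaining $A^\vep,\D v^\vep$ being $L^\infty$-bounded. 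Every intermediate Leibniz term distributes the derivatives so that at most one factor is of top order and the others are $L^\infty$-bounded, hence absorbed into $P(\sup\mathfrak E^\vep)$; summing over $0\leq j\leq m-1$ closes \eqref{m-J-q}.

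The main obstacle is the bookkeeping in the inductive step: one must verify that in each Leibniz term the single top-order factor lands precisely in the index range $m-j$ of the matching integral piece $\int_0^t\|\partial_t^{j-1}(\DD\eta^\vep,v^\vep)\|_{m-j+1}^2$ of $\mathfrak E^\vep$, and that the composition $J^\vep\mapsto(J^\vep)^{-\gamma}$ does not cost regularity. The latter is where the uniform lower bound $J^\vep\geq\tfrac{7}{8}c_0$ and the Moser composition estimate are essential; the rest are routine tame product estimates.
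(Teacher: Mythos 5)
Your proposal is correct, and its backbone---the closed-form expression $q^\vep=(\tilde\rho_0^\vep)^\gamma (J^\vep)^{-\gamma}$ combined with the lower bounds from \eqref{H1} and \eqref{A-priori-assum}---is exactly what the paper uses. The difference is in how the time derivatives are handled. The paper applies $\partial_t^j$ (and spatial derivatives) directly to this formula, so that by chain-rule/Moser estimates $\|\partial_t^j q^\vep\|_{m-j}\ls \|\partial_t^j\nabla\eta^\vep\|_{m-j}$ up to lower-order products, and the time integral of the right-hand side is literally one of the integral terms $\int_0^t\|\D_t^j\DD\eta^\vep\|_{m-j}^2$ of $\mathfrak{E}^\vep$; there is no induction and no use of the transport equation, so the whole proof is a few lines. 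Your route uses the formula only for $j=0$ and then bootstraps in $j$ through $\D_t q^\vep=-\gamma q^\vep\Div_\eta v^\vep$. This works, and your bookkeeping of which factor carries the top-order derivative is sound, but it is heavier than necessary, and in fact it secretly reduces to the same bound: since $\D_t\nabla\eta^\vep=\nabla v^\vep$, your controlling quantities $\int_0^t\|\partial_t^{j-1}v^\vep\|_{m-j+1}^2$ and $\int_0^t\|\partial_t^{j-1}\DD\eta^\vep\|_{m-j+1}^2$ coincide, up to relabeling the time index, with the energy terms $\int_0^t\|\D_t^j\DD\eta^\vep\|_{m-j}^2$ that the paper invokes. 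What direct differentiation of the closed form buys is brevity and no Leibniz expansion of $q^\vep A^\vep\DD v^\vep$ to track; what your induction buys is robustness---it would survive even in a setting where one had no explicit solution formula for $q^\vep$ and only the transport law, at the cost of a longer argument.
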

\begin{proof}
	For the pressure $q$, we have from \eqref{gamma-law} and \eqref{f-eqn},
	\begin{equation}\label{qfor}
		q=f^\gamma=\tilde{\rho}_0^\gamma J^{-\gamma}.
	\end{equation}
	Then, using the apriori assumptions \eqref{H1} and \eqref{A-priori-assum}, we have,  for $j\leq m-1$,
	\begin{align*}
		\int_0^t\|\partial_t^j q\|_{m-j}^2&\ls \|\tilde{\rho}_0^\gamma\|_{L^\infty}^2\int_0^t\|\partial_t^j (J^{-\gamma})\|_{m-j}^2\ls \int_0^t\norm{\partial_t^j\nabla\eta}_{m-j}^2\ls P\left(\sup_{t\in[0,T]}\mathfrak{E}^\vep(t)\right).
	\end{align*}

	
\end{proof}

\subsection{Tangential derivative estimates}
We are ready to derive the estimates of high order tangential derivatives.
\begin{lemma}\label{Tan-est}
	Let $m\geq 4$, for any $t\in[0,T_\vep]$, it follows that
	\begin{align}\label{tan-est-1}
	&\sum_{j=0}^m\|\partial_t^j\partial_\tau^{m-j}(v^\vep, \nabla\eta^\vep)\|_{0}^2(t)+\|\partial_t^j\partial_\tau^{m-j} q^\vep\|_{0}^2(t)+\int_0^t\vep\|\partial_t^j\partial_\tau^{m-j}\DD v^\vep\|_{0}^2 \nonumber\\
	&\ls M_0+\delta\sup_{t\in[0,T_\vep]}\mathfrak{E}^\vep(t)+\sqrt {T_\vep} P\left(\sup_{t\in[0,T_\vep]}\mathfrak{E}^\vep (t)\right).
	\end{align}
\end{lemma}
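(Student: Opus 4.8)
The plan is to apply the pure tangential-temporal operator $\mathcal{D}:=\partial_t^j\partial_\tau^{m-j}$, which commutes with the flat boundary $\Gamma=\{z=0\}$ and creates no normal derivatives, to the momentum equation in \eqref{FEL-final}, and then to pair the result with $\mathcal{D}v^\vep_i$ in $L^2(\Omega)$. Since $\tilde\rho_0^\vep$ is time-independent, the inertial term produces $\tfrac12\tfrac{d}{dt}\|\sqrt{\tilde\rho_0^\vep}\,\mathcal{D}v^\vep\|_0^2$ modulo the commutator $[\mathcal{D},\tilde\rho_0^\vep]\partial_t v^\vep$. The elastic term $-\partial_j(\tilde\rho_0^\vep\partial_j\eta^\vep_i)$, after integration by parts and using $\partial_t\mathcal{D}\eta^\vep=\mathcal{D}v^\vep$, yields the second energy $\tfrac12\tfrac{d}{dt}\|\sqrt{\tilde\rho_0^\vep}\,\nabla\mathcal{D}\eta^\vep\|_0^2$. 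This is exactly the favorable structure highlighted in the introduction: the deformation-gradient term is the time derivative of a positive quantity, so no boundary-layer weight is needed.

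For the pressure term I would exploit the evolution law $\partial_t q^\vep+\gamma q^\vep\Div_\eta v^\vep=0$ (equivalent to \eqref{qfor}). Integrating $\int_\Omega \mathcal{D}(a^\vep_{ik}\partial_k q^\vep)\,\mathcal{D}v^\vep_i$ by parts and invoking the Piola identity \eqref{Piola} turns the leading part into $-\int_\Omega J^\vep\mathcal{D}q^\vep\,A^\vep_{kl}\partial_l\mathcal{D}v^\vep_k$ plus a boundary integral; substituting $A^\vep_{kl}\partial_l\mathcal{D}v^\vep_k=-\tfrac{1}{\gamma q^\vep}\partial_t\mathcal{D}q^\vep$ (from $\mathcal{D}$ applied to the $q$-equation, up to commutators) converts this into $\tfrac{1}{2\gamma}\tfrac{d}{dt}\int_\Omega\tfrac{J^\vep}{q^\vep}|\mathcal{D}q^\vep|^2$ modulo the coefficient-derivative remainder $\tfrac{1}{2\gamma}\int_\Omega\partial_t(J^\vep/q^\vep)|\mathcal{D}q^\vep|^2$. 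This produces the missing $\|\mathcal{D}q^\vep\|_0^2$ energy and closes the $(v^\vep,\nabla\eta^\vep,q^\vep)$ triple on the left of \eqref{tan-est-1}.

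The viscous terms $-2\mu\vep a^\vep_{kl}\partial_l(S_\eta v)^\vep_{ik}-\lambda\vep a^\vep_{ij}\partial_j\Div_\eta v^\vep$, paired with $\mathcal{D}v^\vep$ and integrated by parts, give the leading dissipation $2\mu\vep\int_\Omega J^\vep|S_\eta(\mathcal{D}v^\vep)|^2+\lambda\vep\int_\Omega J^\vep|\Div_\eta\mathcal{D}v^\vep|^2$, which via the identity $|S_\eta f|^2-\tfrac13|\Div_\eta f|^2=|S_\eta f-\tfrac13\Div_\eta f\,I|^2$, the constraints $\mu>0,\ 2\mu+3\lambda>0$, and Korn's inequality \eqref{Korn's ineq} controls $\vep\|\nabla\mathcal{D}v^\vep\|_0^2$ after time integration, matching the dissipation entry of $\mathfrak E^\vep$. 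The boundary integrals generated by the three integrations by parts are treated differently in the two cases: under \eqref{bdDirichlet} one has $\mathcal{D}v^\vep=0$ on $\Gamma$ (tangential and temporal derivatives of a vanishing trace), so every boundary term drops; under the Navier-slip condition I would use \eqref{bdNavier}--\eqref{340} to replace $\tilde\rho_0^\vep F^\vep_{\beta3}+2\mu\vep(S_\eta v^\vep)_{\beta3}a^\vep_{33}$ on $\Gamma$ by $-\alpha\vep|a^\vep_{33}|v^\vep_\beta$, producing the good sign $-\int_\Gamma\alpha\vep|a^\vep_{33}||\mathcal{D}v^\vep|^2$ together with lower-order boundary commutators bounded by the trace estimate \eqref{tre} and Young's inequality exactly as in \eqref{314}.

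Finally I would collect all commutators $[\mathcal{D},a^\vep]$, $[\mathcal{D},\tilde\rho_0^\vep]$, $[\mathcal{D},q^\vep]$ and the coefficient-time-derivative remainders, bound them by Moser/product inequalities so that each is controlled by $P(\mathfrak E^\vep)$, and then integrate in time: Cauchy--Schwarz in $t$ against the $\sqrt\vep$-weighted dissipation contributes $\sqrt{T_\vep}\,P(\sup_t\mathfrak E^\vep)$, while the genuinely top-order but trace-type pieces are absorbed into $\delta\sup_t\mathfrak E^\vep$ by Young's inequality. Summing over $j=0,\dots,m$, integrating on $[0,t]$, and using $\mathfrak E^\vep(0)\le M_0$ (together with Lemma \ref{est-J-q} for the $q$-contributions) yields \eqref{tan-est-1}. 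The main obstacle I anticipate is the bookkeeping of the highest-order commutators, where $\mathcal{D}$ falls entirely on the coefficients $a^\vep$ and hence on $\nabla\eta^\vep$, which carries as many derivatives as the energy itself; there one must use that the remaining factor is of strictly lower order to avoid derivative loss. A secondary difficulty is arranging the Navier-slip boundary terms so that the uncontrolled portion is genuinely $O(\vep)$ or absorbable into the dissipation and the $\delta$-terms, rather than into the main energy.
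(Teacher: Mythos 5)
Your proposal follows essentially the same route as the paper's proof: apply $\bar{\D}^m=\partial_t^j\partial_\tau^{m-j}$ to the momentum equation, pair with $\bar{\D}^m v^\vep$, extract the three energies for $(\bar{\D}^m v^\vep,\nabla\bar{\D}^m\eta^\vep,\bar{\D}^m q^\vep)$, obtain the $\vep$-dissipation via the deviatoric identity and Korn's inequality \eqref{Korn's ineq}, and control commutators by Moser-type estimates plus time integration. Even your pressure bookkeeping is the paper's argument in disguise: the weight $J^\vep/q^\vep$ you produce equals $(\tilde\rho_0^\vep)^{-\gamma}(J^\vep)^{\gamma+1}$, which is exactly the coefficient the paper derives from $\D_tJ^\vep=-\gamma^{-1}(\tilde\rho_0^\vep)^{-\gamma}(J^\vep)^{\gamma+1}\D_tq^\vep$, so the two treatments coincide.

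The one step that would fail as you literally describe it is the top-order Navier-slip boundary commutator. After substituting \eqref{340}, you must handle $\alpha\vep\int_\Gamma[\bar{\D}^m,|a_{33}^\vep|]v^\vep\cdot\bar{\D}^m v^\vep$, whose worst contribution behaves like $\D_\tau\eta^\vep\,\bar{\D}^m\D_\tau\eta^\vep$ restricted to $\Gamma$, i.e.\ $m+1$ derivatives of $\eta^\vep$ on the boundary. Bounding its trace by \eqref{tre} (your stated plan, ``exactly as in \eqref{314}'') would require $\|\nabla\bar{\D}^m\D_\tau\eta^\vep\|_0$, which is $m+2$ derivatives of $\eta^\vep$ and is \emph{not} controlled by $\mathfrak E^\vep$; plain trace plus Young loses half a derivative precisely at top order. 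The paper's fix is the duality estimate \eqref{peadv} of Lemma \ref{pereaf}: write the worst factor as $\D_\tau(\bar{\D}^m\eta^\vep)$ and estimate the boundary pairing by $|\bar{\D}^m\D_\tau\eta^\vep|_{-\frac12}\,|\D_\tau\eta^\vep|_{\frac32}\,|\vep\bar{\D}^m v^\vep|_{\frac12}$, so that the $H^{-\frac12}(\Gamma)$ norm is bounded by $|\bar{\D}^m\eta^\vep|_{\frac12}\ls\|\bar{\D}^m\nabla\eta^\vep\|_0+\|\bar{\D}^m\eta^\vep\|_0$, while the $\vep$-weighted $H^{\frac12}$ trace of $\bar{\D}^m v^\vep$ is absorbed into the dissipation; this is exactly \eqref{rd}. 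With that single replacement, your argument matches the paper's proof.
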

\begin{proof}
Applying $\bar{\D}^m:=\partial_t^j\partial_\tau^{m-j}$ on the second equation in \eqref{FEL-final} and taking the $L^2(\Omega)$ inner product with $\bar{\D}^m v_i$ yield
	\begin{align*}
	&\frac{1}{2}\frac{d}{dt}\int_\Omega\tilde{\rho}_0|\bar{\D}^m v|^2dx+\int_\Omega a_{ik}\D_k \bar{\D}^m q\bar{\D}^m v_idx-2\mu\vep\int_\Omega \bar{\D}^m(a_{kl}\D_l(S_\eta v)_{ik})\bar{\D}^m v_idx\\
	&-\lambda\vep\int_\Omega \bar{\D}^m(a_{ij}\D_j\Div_
	\eta v)\bar{\D}^m v_idx-\int_\Omega \D_j(\bar{\D}^m(\tilde{\rho}_0\D_j\eta_i))\bar{\D}^m v_idx\\
	&=-\underbrace{\int_\Omega[\bar{\D}^m,\tilde{\rho}_0]\D_tv_i\bar{\D}^m v_idx}_{R_\eta^1}-\underbrace{\int_\Omega[\bar{\D}^m,a_{ik}]\D_kq\bar{\D}^m v_idx}_{R_q^1}.
	\end{align*}
	Using integration by parts and   Piola's identity \eqref{Piola}, we have
	\begin{align}
			&\quad\int_\Omega a_{ik}\D_k \bar{\D}^m q\bar{\D}^m v_idx-2\mu\vep\int_\Omega \bar{\D}^m(a_{kl}\D_l(S_\eta v)_{ik})\bar{\D}^m v_idx\nonumber\\
	&\quad-\lambda\vep\int_\Omega \bar{\D}^m(a_{ij}\D_j\Div_\eta (v))\bar{\D}^m v_idx-\int_\Omega \D_j(\bar{\D}^m(\tilde{\rho}_0\D_j\eta_i))\bar{\D}^m v_idx\nonumber\\
	&=-\int_\Omega a_{ik}\bar{\D}^m q\bar{\D}^m \D_kv_idx+2\mu\vep\int_\Omega\bar{\D}^m((S_\eta v)_{ik}a_{kl})\D_l\bar{\D}^m v_idx\nonumber\\
    &\quad+\lambda\vep\int_\Omega \bar{\D}^m(a_{ij}\Div_\eta v)\D_j\bar{\D}^m v_idx+\int_\Omega \bar{\D}^m(\tilde{\rho}_0\D_j\eta_i)\D_j\bar{\D}^m v_idx\nonumber
	\\&\quad \underbrace{-\int_\Gamma \bar\D^m q a_{i3}\bar\D^m v_i+\lambda\vep \int_\Gamma \bar\D^m\left(a_{i3}\Div_\eta(v)\right)\bar\D^m v_i}_{R_{b1}}\nonumber\\&\quad\underbrace{+2\mu\vep\int_\Gamma \bar\D^m(a_{k3}S_\eta(v)_{ik})\bar\D^mv_i+\int_\Gamma\bar\D^m(\tilde\rho_0\partial_3\eta_i)\bar\D^mv_i}_{R_{b2}}.
		\label{infd2}
	\end{align}
	For $R_{b1}$, from the first boundary condition in \eqref{bdNavier} we see that it equals to zero.

	For the  four integrals defined on the   domain $\Omega$ in \eqref{infd2}, we have
	\begin{align}
			&\quad-\int_\Omega a_{ik}\bar{\D}^m q\bar{\D}^m \D_kv_idx+2\mu\vep\int_\Omega\bar{\D}^m((S_\eta v)_{ik}a_{kl})\D_l\bar{\D}^m v_idx\nonumber\\
    &\quad+\lambda\vep\int_\Omega \bar{\D}^m(a_{ij}\Div_\eta v)\D_j\bar{\D}^m v_idx+\int_\Omega \bar{\D}^m(\tilde{\rho}_0\D_j\eta_i)\D_j\bar{\D}^m v_idx
			\nonumber\\&=-\int_\Omega\bar{\D}^m q\bar{\D}^m(a_{ik}\D_kv_i)dx+\underbrace{\int_\Omega\bar{\D}^m q[\bar{\D}^m,a_{ik}]\D_kv_idx}_{R_\eta^2}+2\mu\vep\int_\Omega J|S_{\eta}(\bar{\D}^m v)|^2dx\nonumber\\
			&\quad+\lambda\vep\int_\Omega J\abs{\Div_\eta(\bar{\D}^m v)}^2dx+\underbrace{2\mu\vep\int_\Omega[\bar{\D}^m, a_{kl}] (S_\eta v)_{ik}\D_l\bar{\D}^m v_idx}_{R_\vep^1}\nonumber\\
			&\quad+\underbrace{\lambda\vep\int_\Omega[\bar{\D}^m,a_{ij}](\Div_\eta v)\D_j\bar{\D}^m v_idx}_{R_\vep^2}+\underbrace{2\mu\vep\int_\Omega[\bar{\D}^m, A_{ij}]\D_jv_kJS_\eta(\bar{\D}^m v)_{ik}}_{R_\vep^3}\nonumber\\
			&\quad+\underbrace{\lambda\vep\int_\Omega[\bar{\D}^m, A_{ij}]\D_j v_iJ\Div_\eta(\bar{\D}^m v)}_{R_\vep^4}+\int_\Omega\tilde{\rho}_0\D_j\bar{\D}^m\eta_i\D_j\bar{\D}^m v_idx+\underbrace{\int_\Omega[\bar{\D}^m,\tilde{\rho}_0]\D_j\eta_i\D_j\bar{\D}^m v_idx}_{R_\eta^3}\nonumber\\
			&=\frac{1}{2}\frac{d}{dt}\int_\Omega\tilde{\rho}_0|\bar{\D}^m\nabla\eta|^2dx-\int_\Omega\bar{\D}^m q\bar{\D}^m\D_tJdx+2\mu\vep\int_\Omega J|S_{\eta}(\bar{\D}^m v)|^2dx+\lambda\vep\int_\Omega J\abs{\Div_\eta(\bar{\D}^m v)}^2dx\nonumber\\&\quad+R_\eta^2+R_\eta^3+\sum_{i=1}^4R_\vep^i.
	\end{align}
Since \eqref{f-eqn} implies
\begin{equation}\label{q-eqn2}
	\D_tJ=\D_t(\tilde{\rho}_0f^{-1})=-\tilde{\rho}_0\frac{\D_tf}{f^2}=-\tilde{\rho}_0\frac{\D_tq}{q'(f)f^2}=-\frac{\tilde{\rho}_0\D_tq}{\gamma Af^{\gamma+1}}=-\frac{J^{\gamma+1}}{\gamma A\tilde{\rho}_0^\gamma}\D_tq,
\end{equation}
then
\begin{align*}\label{I-est}
	&-\int_\Omega\bar{\D}^m q\bar{\D}^m\D_tJdx=\frac{1}{\gamma }\int_\Omega\bar{\D}^m q\bar{\D}^m(\tilde{\rho}_0^{-\gamma}J^{\gamma+1}\D_t q)dx\nonumber\\
	&=\frac{1}{\gamma }\int_\Omega\tilde{\rho}_0^{-\gamma}J^{\gamma+1}\bar{\D}^m q\D_t\bar{\D}^m qdx+\frac{1}{\gamma }\int_\Omega\bar{\D}^m q[\bar{\D}^m,\tilde{\rho}_0^{-\gamma}J^{\gamma+1}]\D_tqdx\\
	&=\frac{1}{2\gamma }\frac{d}{dt}\int_\Omega\tilde{\rho}_0^{-\gamma}J^{\gamma+1}|\bar{\D}^m q|^2dx\underbrace{-\frac{\gamma+1}{2\gamma }\int_\Omega\tilde{\rho}_0^{-\gamma}J^\gamma J_t|\bar{\D}^m q|^2dx}_{R_q^2}+\underbrace{\frac{1}{\gamma }\int_\Omega\bar{\D}^m q[\bar{\D}^m,\tilde{\rho}_0^{-\gamma}J^{\gamma+1}]\D_tqdx}_{R_q^3}.\nonumber
\end{align*}
	Consequently, collecting the above equations together, we can get the following
	\begin{equation}\label{tang-est}
	\begin{aligned}
		&\frac{1}{2}\frac{d}{dt}\int_\Omega\tilde{\rho}_0(|\bar{\D}^m v|^2+|\bar{\D}^m\nabla\eta|^2)dx+\frac{1}{2\gamma }\frac{d}{dt}\int_\Omega\tilde{\rho}_0^{-\gamma}J^{\gamma+1}|\bar{\D}^m q|^2dx\\
		&\quad+2\mu\vep\int_\Omega J|S_{\eta}(\bar{\D}^m v)|^2dx+\lambda\vep\int_\Omega J\abs{\Div_\eta(\bar{\D}^m v)}^2dx\\
		&=-\left(\sum_{i=1}^{3}R_\eta^i+\sum_{i=1}^{3}R_q^i+\sum_{i=1}^{4}R_\vep^i\right)-R_{b2}.
	\end{aligned}	
	\end{equation}	
We turn to estimate each of the terms   $R_{b2}$, $R_\eta^i$, $R_q^i\ (i=1, 2, 3)$ and $R_\vep^j\ (j=1,2,3,4)$.

	Firstly, for $R_{b2}$, with the first boundary condition in \eqref{bdNavier} and \eqref{340}, we have
	\begin{align}
			R_{b2}=&\sum_{\beta=1,2}\int_\Gamma \bar\D^m\left(2\mu\vep(S_\eta(v))_{\beta k}a_{k3}+\tilde\rho_0\D_3\eta_\beta\right)\bar\D^mv_\beta\nonumber\\=&-\alpha\vep \int_\Gamma \bar\D^m (\abs{a_{33}}v)\cdot\bar\D^mv=\alpha\vep \int_\Gamma \abs{a_{33}}\abs{\bar\D^m v}^2+\alpha\vep \int_\Gamma \left[\bar\D^m, \abs{a_{33}}\right]v\cdot \bar\D^m v\label{wqq}.
	\end{align}
	Thus, with the expression of $a_{33}$ and the trace estimate \eqref{peadv}, we arrive at
	\begin{equation}
		\abs{\alpha\vep\int_\Gamma\abs{a_{33}}\abs{\bar\D^m v}^2}\ls P\left(\sqrt{\mathfrak F^\vep}\right)\left(\vep^2\norm{\bar\D^m v}_0^2+\vep^2\norm{\bar\D^m\nabla v}_0^2\right),
	\end{equation}
	and
	\begin{align}
		\abs{\int_\Gamma \left[\bar\D^m,\abs{a_{33}}\right]v\cdot\bar\D^m v} &\ls \abs{\bar\D^m\D_\tau\eta}_{-\hal}\abs{\partial_\tau\eta}_{\frac{3}{2}}\abs{\vep\bar\D^m v}_{\hal}\nonumber\\&\ls P\left(\sqrt{\mathfrak F^\vep}\right)\left(\varepsilon^2\norm{\bar\D^m v}_0^2+\varepsilon^2\norm{\nabla\bar\D^m v}_0^2+\norm{\bar\D^m\nabla\eta}_0^2\right).	\label{rd}
	\end{align}

	Next, for $R_{\eta}^i\ (i=1,2,3)$, we have
	\begin{align}
				\left|\int_{0}^{t}R_\eta^1\right|&\ls\int_0^t\|[\bar{\D}^m,\tilde{\rho}_0]\D_t v_i\|_0\|\bar{\D}^m v_i\|_0\nonumber\\
				&\ls\int_0^t\norm{\tilde\rho_0}_m\norm{\partial_t v}_{L^\infty}\norm{\bar\D^mv}_0+\norm{\bar \D\tilde\rho_0}_{L^\infty}\norm{\bar\D^mv}_0^2\nonumber\\
				&\lesssim TP\left(\sup_{t\in[0,T]}\mathfrak E^\epsilon(t)\right).\label{R1-est}
	\end{align}
	Similarly,  the following holds
	\begin{align}
		\left|\int_{0}^{t}R_\eta^2\right|&\lesssim \int_0^t\norm{\bar\D^m q}_0\norm{\left[\bar\D^m, a_{ik}\right]\partial_k v}_0\nonumber\\&\ls \int_0^t\norm{\bar\D^m q}_0\left(\norm{\bar\D^m\nabla\eta}_0\norm{\nabla v}_{L^\infty}+\norm{\bar\D^m\nabla\eta}_0\norm{a}_{L^\infty}+\sqrt{\mathfrak F^\vep}\norm{\bar\D\nabla v}_{L^\infty}\right)\nonumber\\&\lesssim \sqrt TP\left(\sup_{t\in[0,T]}\mathfrak E^\epsilon(t)\right).\label{R3-est}
	\end{align}
	For $R_{\eta}^3$, using integration by parts with respect to $t$ and Cauchy's inequality, we obtain
	\begin{align}\label{R4-est}
		\left|\int_{0}^{t}R_\eta^3\right|&\lesssim\left|\int_\Omega[\bar{\D}^m,\tilde{\rho}_0]\D_j\eta_i\D_j\bar{\D}^m \eta_i\Big|_0^t\right|+\left|\int_{0}^{t}\int_\Omega[\bar{\D}^m,\tilde{\rho}_0]\D_t\D_j\eta_i\D_j\bar{\D}^m \eta_i\,dxd\mathsf t\right|\nonumber\\
		&\ls M_0+\delta\|\bar{\D}^m\nabla\eta(t)\|_{0}^2+C_\delta \mathfrak F^\vep+\int_0^t\norm{\bar\D^{m}\nabla\eta}_0^2\norm{\tilde\rho_0}_m\nonumber\\
		&\ls M_0+\delta\|\bar{\D}^m\nabla\eta(t)\|_{0}^2+TP\left(\sup_{t\in[0,T]}\mathfrak E^\epsilon(t)\right).
\end{align}	

Now we  estimate $R_q^i\ (i=1,2,3)$.
For $R_q^1$, from  \eqref{est-J-q} we have
		\begin{align}\label{R2-est}
		\left|\int_{0}^{t}R_q^1\right|&\lesssim\int_0^t\norm{\left[\bar\D^m, a_{ik}\right]\partial_kq}_0\norm{\bar\D^mv}_0\nonumber\\&\ls\int_0^t\left(\norm{\bar\D^m\nabla\eta}_0\norm{\nabla q}_{L^\infty}+\norm{\bar\D^{m-1}\nabla q}_0\norm{\bar\D a}_{L^\infty}\right)\norm{\bar\D^m v}_0\nonumber\\&\lesssim \sqrt TP\left(\sup_{t\in[0,T]}\mathfrak E^\epsilon(t)\right).
	\end{align}
In view of \eqref{H1}, \eqref{A-priori-assum}, one can derive the following
	\begin{equation}\label{Rq1-est}
	\left|\int_{0}^{t}R_{q}^2\right|\lesssim TP\left(\sup_{t\in[0,T]}\mathfrak E^\epsilon(t)\right).
	\end{equation}
It follows from \eqref{m-J-q} that
	\begin{align}\label{Rq2-est}
	\left|\int_{0}^{t}R_{q}^3\right|&\ls \int_0^t\|\bar{\D}^m q\|_{0}\|[\bar{\D}^m,\tilde{\rho}_0^{-\gamma}J^{\gamma+1}]\D_tq\|_{0}\nonumber\\
	&\ls \int_0^t\|\bar{\D}^m q\|_{0}\left(\|\bar{\D}(\tilde{\rho}_0^{-\gamma}J^{\gamma+1})\|_{L^\infty}\|\bar\D^{m-1}\D_tq\|_{0}+\|\D_tq\|_{L^\infty}\norm{\bar{\D}^m(\tilde{\rho}_0^{-\gamma}J^{\gamma+1})}_{0}\right)\nonumber\\
	&\lesssim TP\left(\sup_{t\in[0,T]}\mathfrak E^\epsilon(t)\right).
	\end{align}
Finally, for $R_\vep^{1,2}$, one has
\begin{align}\label{Rvep1}
		\left|\int_{0}^{t}R_\vep^1+R_\vep^2\right|&\ls\int_0^t\norm{\sqrt\vep[\bar{\D}^m, a](A\nabla v)}_0\norm{\sqrt\vep\DD\bar{\D}^m v}_0\nonumber\\
		&\ls \delta\vep\int_0^t\norm{\DD\bar{\D}^m v}_{0}^2+TP\left(\sup_{t\in[0,T]}\mathfrak{E}^\vep(t)\right).
\end{align}
Similarly, we have
\begin{align}\label{Rvep2}
	\left|\int_{0}^{t}R_\vep^3+R_\vep^4\right|&\ls \int_0^t\|\sqrt\vep[\bar{\D}^m, A]\DD v\|_{0}\left(\|\sqrt\vep\sqrt{J}S_{\eta}(\bar{\D}^m v)\|_{0}+\|\sqrt\vep\sqrt{J}\Div_\eta(\bar{\D}^m v)\|_{0}\right)\nonumber\\
                                              &\ls\delta\vep\int_0^t\|\nabla\bar{\D}^m v\|_{0}^2+TP\left(\sup_{t\in[0,T]}\mathfrak{E}^\vep(t)\right).
\end{align}
Therefore, integrating \eqref{tang-est} over $[0,t]$,    then using \eqref{rd}, \eqref{R1-est}--\eqref{Rvep2} and Korn's inequality, we finish the proof of Lemma \ref{Tan-est}.
\end{proof}

\subsection{Normal derivative estimates}
In this subsection, we will derive the estimates of normal derivatives. To this end, we can first obtain  from the second equation in $\eqref{FEL-final}$, \eqref{qfor} and \eqref{Geo-iden-1}  that
\begin{align*}
	&-\tilde{\rho}_0J\Delta\eta_i-\gamma (\tilde{\rho}_0J^{-1})^{\gamma}a_{ik}a_{rs}\D_k\D_s\eta_r-\mu\vep a_{kl}a_{kj}\D_j\D_l v_i-(\mu+\lambda)\vep  a_{kl}a_{ij}\D_l\D_j v_k\\
	&=-\gamma (\tilde{\rho}_0J^{-1})^{\gamma-1}J^{-1}a_{ik}\D_k\tilde\rho_0+\D_j\tilde{\rho}_0J\D_j\eta_i-\tilde{\rho}_0J\D_tv_i+\mu\vep Ja_{kl}\D_l A_{kj}\D_jv_i\\
	&\quad+\mu\vep Ja_{kl}\D_lA_{kj}\D_jv_i+\lambda\vep Ja_{ij}\D_jA_{kl}\D_lv_k.
\end{align*}
As a consequence, we have
\begin{equation}\label{normal-eqn}
	-\mathcal{A}_{ij}\D_3^2\eta_j-\mu\vep a_{k3}a_{k3}\D_3^2v_i-(\mu+\lambda)\vep a_{i3}a_{j3}\D_3^2v_j= \mathcal{F}_i+\mathcal{G}_i,
\end{equation}
where
\begin{equation*}
	\mathcal{A}_{ij}=\tilde{\rho}_0J\delta_{ij}+\gamma (\tilde{\rho}_0J^{-1})^\gamma a_{i3}a_{j3}=\tilde\rho_0 J(\delta_{ij}+\gamma\tilde\rho_0^{\gamma-1}J^{-\gamma-1}a_{i3}a_{j3}),
\end{equation*}
\begin{align*}
	\mathcal{F}_i=&\sum\limits_{l\neq 3,\text{ or }j\neq 3}\left(\mu\vep a_{kl}a_{kj}\D_j\D_l v_i+(\mu+\lambda)\vep  a_{kl}a_{ij}\D_l\D_j v_k\right)\nonumber\\
	&+\mu\vep Ja_{kl}\D_l A_{kj}\D_jv_i+\mu\vep Ja_{kl}\D_lA_{kj}\D_jv_i+\lambda\vep Ja_{ij}\D_jA_{kl}\D_lv_k,
\end{align*}
and
\begin{align*}
        \mathcal{G}_i=&\sum_{l\neq3,\text{ or }j\neq 3}\gamma (\tilde{\rho}_0J^{-1})^\gamma a_{il}a_{rj}\D_{lj}^2\eta_r\nonumber\\
                  &-\tilde{\rho}_0J\D_t v_i-\tilde{\rho}_0J\D_1^2\eta_i-\tilde{\rho}_0J\D_2^2\eta_i-\gamma\tilde{\rho}^{\gamma-1}_0J^{-\gamma}a_{ik}\D_k \tilde{\rho}_0+J\DD\tilde{\rho}_0\cdot\DD\eta_i.		
\end{align*}
It is straightforward to check that $\mathcal{A}$ is symmetric, and
 $\mathcal{A}$ is positive definite. In fact, for any vector $\boldsymbol{x}\in \mathbb{R}^3$, we have
\begin{equation*}
	\boldsymbol{x}^{T}\mathcal{A}\boldsymbol{x}=\tilde\rho_0 J\abs{\boldsymbol{x}}^2+\gamma(\tilde\rho_0 J^{-1})^\gamma \abs{a_{\cdot 3}\cdot \boldsymbol{x}}^2.
\end{equation*}
Since $\mathcal{A}$ is positive definite, with a priori assumption \eqref{A-priori-assum}, one can estimate the normal derivatives of $\eta$  by using \eqref{normal-eqn}. Consequently, we have the following lemma.
\begin{lemma}\label{Nor-est}
	For any $t\in[0,T_\vep]$, $m\geq 4$, one has
	\begin{align}\label{nor-est}
		&\sum_{j=0}^{m-1}\norm{\sqrt{\vep}\partial_t^j\DD^2\eta^\vep(t)}_{m-1-j}^2+\sum_{j=0}^m\int_{0}^{t}\|\partial_t^j(\DD\eta^\vep, v^\vep,\vep\nabla v^\vep)\|_{m-j}^2\nonumber\\
		&\ls M_0+\delta\sup_{t\in[0,T_\vep]}\mathfrak{E}^\vep(t)+\sqrt T P\left(\sup_{t\in[0,T_\vep]}\mathfrak{E}^\vep(t)\right).
	\end{align}
\end{lemma}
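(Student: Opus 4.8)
The plan is to recover the normal derivatives, which do not appear in the purely tangential bound of Lemma~\ref{Tan-est}, directly from the algebraic relation \eqref{normal-eqn}. Using $v^\vep=\D_t\eta^\vep$, so that $\D_3^2v^\vep=\D_t\D_3^2\eta^\vep$, I would read \eqref{normal-eqn} as a degenerate parabolic system for the unknown $w:=\D_3^2\eta^\vep$,
\begin{equation*}
\vep\,B_{ij}\D_t w_j+\mathcal{A}_{ij}w_j=-(\mathcal{F}_i+\mathcal{G}_i),\qquad B_{ij}:=\mu\abs{a_{\cdot 3}}^2\delta_{ij}+(\mu+\lambda)a_{i3}a_{j3},
\end{equation*}
where, by \eqref{H1} and the a priori assumption \eqref{A-priori-assum}, both the reaction matrix $\mathcal{A}$ (whose coercivity $\boldsymbol{x}^\top\mathcal{A}\boldsymbol{x}=\tilde\rho_0J\abs{\boldsymbol{x}}^2+\gamma(\tilde\rho_0J^{-1})^\gamma\abs{a_{\cdot 3}\cdot\boldsymbol{x}}^2$ is displayed just above the lemma) and the viscous matrix $B$ are uniformly positive definite in $\vep$ under the physical constraints $\mu>0$, $2\mu+3\lambda>0$. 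The structural gain is that $\mathcal{F}_i$ and $\mathcal{G}_i$ contain at most one normal derivative of $\eta^\vep$ and at most one normal derivative of $\vep v^\vep$, so this relation trades two normal derivatives of $\eta^\vep$ for at most one.

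The core of the argument is a weighted energy estimate for this system, carried out as a finite induction on the number $b$ of normal derivatives. For each admissible triple $(j,a,b)$ I would apply $\D_t^j\D_\tau^a\D_3^b$ to the system, multiply in $L^2(\Omega)$ by $\D_t^j\D_\tau^a\D_3^b w=\D_t^j\D_\tau^a\D_3^{b+2}\eta^\vep$, and integrate over $[0,t]$. The reaction term produces, by coercivity of $\mathcal{A}$, the time-integrated quantity $\int_0^t\|\D_t^j\D_\tau^a\D_3^{b+2}\eta^\vep\|_0^2$, which assembles into the $\DD\eta^\vep$ part of $\sum_j\int_0^t\|\D_t^j(\DD\eta^\vep,v^\vep,\vep\DD v^\vep)\|_{m-j}^2$; the viscous term produces, via $\vep\int B\,\D_t w\cdot w=\frac{\vep}{2}\frac{d}{dt}\int w\cdot Bw+\text{l.o.t.}$ and coercivity of $B$, the pointwise-in-time weighted quantity $\|\sqrt\vep\,\D_t^j\D^2\eta^\vep\|_{m-1-j}^2(t)$. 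The base case $b\le 1$ is exactly supplied by Lemma~\ref{Tan-est}, since one normal derivative of $\eta^\vep$ is already contained in its control of $\D_t^j\D_\tau^{m-j}\DD\eta^\vep$; the inductive step then raises the normal count by one because every term on the right-hand side carries strictly fewer normal derivatives of $\eta^\vep$. Normal derivatives of $v^\vep$ follow from $v^\vep=\D_t\eta^\vep$, and the $\vep\DD v^\vep$ contribution is recovered by bootstrapping the tangential viscous dissipation $\vep\int_0^t\|\D_t^j\D_\tau^{m-j}\DD v^\vep\|_0^2$ of \eqref{tan-est-1} together with Korn's inequality \eqref{Korn's ineq}.

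The main obstacle I anticipate is the bookkeeping of commutators and $\vep$-powers. The coefficients $\mathcal{A}$, $B$, $a^\vep$, $J^\vep$, $\tilde\rho_0^\vep$ are nonlinear functions of $\DD\eta^\vep$, so $\D_t^j\D_\tau^a\D_3^b$ generates commutators involving high derivatives of $\eta^\vep$; these must be estimated by the product and Moser-type inequalities and bounded through $\mathfrak{F}^\vep$ (controlled by \eqref{etaest}) and the pressure estimate \eqref{m-J-q}, so that after time integration they contribute only $\sqrt{T}\,P\!\left(\sup_{t\in[0,T_\vep]}\mathfrak{E}^\vep(t)\right)$, while the $t=0$ endpoint terms give $M_0$ and the remaining top-order couplings are absorbed into $\delta\sup_{t\in[0,T_\vep]}\mathfrak{E}^\vep(t)$. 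The genuinely delicate point is that the $\vep$-viscous remainders in $\mathcal{F}$ carry their own normal derivatives of $v^\vep=\D_t\eta^\vep$, which sit at the same differential order as the principal unknown; one must check that each such term appears either with a strictly lower normal count or with a compensating power of $\vep$, so that no negative power of $\vep$ is ever produced and the estimate stays uniform. The uniform positive definiteness of $\mathcal{A}$ — the ``good positive structure'' obtained by combining the pressure with the non-degenerate deformation gradient — is exactly what guarantees this coercivity and ultimately prevents the formation of a strong boundary layer. Collecting the pointwise and time-integrated bounds and choosing $\delta$ small then yields \eqref{nor-est}.
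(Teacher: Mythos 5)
Your core strategy is the same as the paper's: both arguments hinge on the pointwise relation \eqref{normal-eqn}, the uniform positive definiteness of $\mathcal{A}$ and of the viscous coefficient matrix, an induction on the number of normal derivatives with the tangential estimates of Lemma \ref{Tan-est} as base, and Moser/commutator bounds absorbed into $M_0+\delta\sup\mathfrak{E}^\vep+\sqrt{T}P(\sup\mathfrak{E}^\vep)$. The implementations differ in one algebraic step: the paper applies $\bar{\D}^\beta\D_3^\ell$ and then \emph{squares} the entire left-hand side in $L^2(\Omega)$ (see \eqref{square}); since $\D_3^2v^\vep=\D_t\D_3^2\eta^\vep$, the cross terms in the square are exact time derivatives, so one identity simultaneously yields the time-integrated control of $\bar{\D}^\beta\D_3^{\ell+2}\eta^\vep$, the pointwise $\sqrt\vep$-weighted control of $\bar{\D}^\beta\D_3^{\ell+2}\eta^\vep$, \emph{and} the dissipation $\vep^2\int_0^t\norm{\bar{\D}^\beta\D_3^{\ell+2}v^\vep}_0^2$. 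Your testing of the equation against $w$ produces the first two of these, but not the third.

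That third component is where your proposal has a genuine gap. The quantity $\int_0^t\norm{\D_t^j(\vep\nabla v^\vep)}_{m-j}^2$ is part of the claimed estimate, and your proposed recovery---bootstrapping the tangential dissipation of \eqref{tan-est-1} with Korn's inequality---cannot produce it: Korn's inequality \eqref{Korn's ineq} bounds the full gradient by the symmetric gradient \emph{at the same differentiation level}; it never converts tangential derivatives into normal ones, so $\vep\int_0^t\norm{\D_t^j\D_\tau^{m-j}\DD v^\vep}_0^2$ gives no information on, say, $\vep^2\int_0^t\norm{\D_3^2v^\vep}_{m-2}^2$. Moreover this piece is not optional bookkeeping: your own induction consumes it, because after applying $\D_t^j\D_\tau^a\D_3^b$ the right-hand side term coming from $\mathcal{F}$ contains $\vep\, a a\,\D_3^{b+1}\D v^\vep$, whose time-integrated $L^2$ norm is exactly the $\vep\nabla v^\vep$ dissipation from level $b-1$. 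The fix is immediate and stays inside your framework: since $B$ is uniformly invertible under \eqref{H1} and \eqref{A-priori-assum} (its eigenvalues are $\mu\abs{a_{\cdot 3}}^2$ and $(2\mu+\lambda)\abs{a_{\cdot 3}}^2$, with $2\mu+\lambda>\tfrac43\mu>0$ and $\abs{a_{\cdot3}}$ bounded below because $J^\vep\geq c_0-\tfrac18 c_0$), solve the differentiated equation for the viscous term to get
\begin{equation*}
\vep\norm{\D_t^j\D_\tau^a\D_3^{b+2}v^\vep}_0\ls\norm{\D_t^j\D_\tau^a\D_3^{b+2}\eta^\vep}_0+\norm{\text{commutators}}_0+\norm{\D_t^j\D_\tau^a\D_3^b(\mathcal{F},\mathcal{G})}_0,
\end{equation*}
and integrate in time; every term on the right is already controlled at the current induction level. (Equivalently, replace the multiplication by $w$ with the paper's squaring of the equation, which delivers all three components at once.) With this correction your argument closes and is a legitimate variant of the paper's proof.
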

\begin{pf}
	 Applying the operator $\bar{\D}^\beta$ with $|\beta|\leq m-1$ to \eqref{normal-eqn} yields that
	 	\begin{align}\label{2nd-ord-eqn}	
	 		&\underbrace{-\mathcal{A}_{ij}\bar{\D}^\beta\D_3^2\eta_j-\mu\vep a_{k3}a_{k3}\bar{\D}^\beta\D_3^2v_i-(\mu+\lambda)\vep a_{i3}a_{j3}\bar{\D}^\beta\D_3^2v_j}_{R_n}\nonumber\\
	 		&=[\bar{\D}^\beta,\mathcal{A}_{ij}]\D_3^2\eta_j+\mu\vep[\bar{\D}^\beta,a_{k3}a_{k3}]\D_3^2v_i+(\mu+\lambda)\vep[\bar{\D}^\beta,a_{i3}a_{j3}]\D_3^2v_j+\bar{\D}^\beta\mathcal{F}_i+\bar{\D}^\beta\mathcal{G}_{i}.
	 	\end{align}
	 Then, we square the left-hand side of \eqref{2nd-ord-eqn} and integrate it over $\Omega$ to obtain
	 \begin{align}\label{square}
		\int_{\Omega}|R_n|^2=&\int_\Omega \abs{\mathcal{A}\bar\D^\beta\D_3^2\eta}^2+\mu^2\vep^2\abs{\abs{a_{\cdot 3}}^2\bar\D^\beta\D_3^2 v}^2+(\mu+\lambda)^2\vep^2 \abs{\bar\D^\beta\D_3^2v\cdot a_{\cdot 3}}^2\abs{a_{\cdot 3}}^2\nonumber\\&+\dfrac{d}{dt}\int_\Omega\mu\vep \abs{a_{\cdot 3}^2}\mathcal{A}_{ij}\bar\D^\beta\D_3^2\eta_i\bar\D^\beta\D_3^2\eta_j-\int_\Omega \mu\vep\D_t(\abs{a_{\cdot 3}}^2\mathcal{A}_{ij})\bar\D^\beta\D^2_3\eta_i\bar\D^\beta\D_3^2\eta_j\nonumber\\&+\dfrac{d}{dt}\int_\Omega (\mu+\lambda)\vep \tilde\rho_0 J(1+\gamma\tilde\rho_0^{\gamma-1}J^{-\gamma-1}\abs{a_{\cdot 3}}^2)\abs{\bar\D^\beta\D_3^2\eta\cdot a_{\cdot 3}}^2\\&-\int_\Omega (\mu+\lambda)\vep \D_t(\mathcal{A}_{ij}a_{i3}a_{k3})\bar\D^\beta\D_3^2\eta_j\bar\D^\beta\D_3^2\eta_k+2\int_\Omega\mu(\mu+\lambda)\vep^2\abs{a_{\cdot 3}}^2\abs{\bar\D^\beta\D_3^2v\cdot a_{\cdot 3}}^2,\nonumber
	 \end{align}
%
where we used
\begin{equation*}
	\mathcal{A}_{ij}a_{i3}=\tilde\rho_0 J(1+\gamma\tilde\rho_0^{\gamma-1}J^{-\gamma-1}\abs{a_{\cdot 3}}^2)a_{j3}.
\end{equation*}

     Since $\mu>0, 2\mu+3\lambda>0$, then $\mu+\lambda>\dfrac{1}{3}\mu>0$. Using the fact that $\mathcal{A}$ is positive definite and {\it a priori} assumption \eqref{A-priori-assum}, we can   integrate \eqref{square} over time to obtain,
\begin{align}\label{nor-est-1}
	&\norm{\sqrt{\vep}\bar{\D}^\beta\D_3^2\eta}_0^2(t)+\int_0^t\left(\norm{\bar\D^\beta\partial_3^2\eta}_0^2+\norm{\vep \bar\D^\beta\D_3^2v}_0^2\right) \nonumber\\\ls& \norm{\sqrt\vep\bar\D^\beta\partial_3^2\eta}_0^2(0)+\int_{0}^{t}\norm{\D_t\bar\D\eta}_{L^\infty}\norm{\sqrt{\vep}\bar{\D}^\beta\D_3^2\eta}_0^2+\int_0^t\norm{[\bar{\D}^\beta,\mathcal{A}_{\cdot j}]\D_3^2\eta_j}_{0}^2\nonumber\\
	&+\int_0^t\vep^2\norm{[\bar{\D}^\beta, a_{\cdot 3}a_{\cdot 3}]\D_3^2 v}_{0}^2+\norm{\bar{\D}^\beta\mathcal{F}}_{0}^2+\norm{\bar{\D}^\beta\mathcal{G}}_{0}^2.
\end{align}
It is straightforward to check that
\begin{align}\label{nor-est-1.1}
	\int_{0}^{t}\norm{\D_t\bar\D\eta}_{L^\infty}\|\sqrt{\vep}\bar{\D}^\beta\D_3^2\eta\|_0^2\ls TP\left(\sup_{t\in[0,T]}\mathfrak{E}^\vep(t)\right),
\end{align}
\begin{align}\label{A-com-est}
		\int_0^t\norm{[\bar{\D}^\beta,\mathcal{A}_{\cdot j}]\D_3^2\eta_j}_{0}^2
		&\ls\int_0^t\norm{\bar{\D}\mathcal{A}_{\cdot j}}_{L^\infty}^2\norm{\bar\D^{\beta-1}\D_3^2\eta_j}_{0}^2+\norm{\D_3^2\eta_j}_{L^\infty}^2\norm{\bar{\D}^\beta\mathcal{A}_{\cdot j}}_{0}^2
		\nonumber\\&\ls TP\left(\sup_{t\in[0,T]}\mathfrak{E}^\vep(t)\right),
\end{align}
and
\begin{align}\label{nor-est-1.2}
	\int_0^t\vep^2\|[\bar{\D}^\beta, a_{\cdot 3}a_{\cdot 3}]\D_3^2v\|_{0}^2&\ls\int_0^t\norm{\bar{\D}(a_{\cdot 3}a_{\cdot 3})}_{L^\infty}^2\norm{\sqrt\vep\bar\D^{\beta-1}\D_3^2v}_{0}^2+\norm{\sqrt\vep\D_3^2v}_{L^\infty}^2\norm{\bar{\D}^\beta(a_{\cdot 3}a_{\cdot 3})}_{0}^2\nonumber\\
	&\ls TP\left(\sup_{t\in[0,T]}\mathfrak{E}^\vep(t)\right).
\end{align}
By using \eqref{A-priori-assum} and \eqref{tan-est-1}, we have
\begin{align}\label{nor-est-1.3}
	\int_0^t\norm{\bar{\D}^\beta{\mathcal{F}}}_{0}^2&\ls \int_0^t\vep\|aa\|_{L^\infty}^2\norm{\sqrt\vep\bar{\D}^{\beta+1}\DD v}_{0}^2+\norm{\vep[\bar{\D}^\beta,aa]\bar\D\DD v}_{0}^2\nonumber\\
	&\quad+\int_0^t\left(\|\DD^2\eta\|_{L^\infty}^2\|\vep\DD v\|_{m-1}^2+\|\DD v\|_{L^\infty}^2\|\vep\DD^2\eta\|_{m-1}^2\right)\nonumber\\
	&\ls M_0+\delta\sup_{t\in [0,T_\vep]}\mathfrak E^\vep(t)+\sqrt TP\left(\sup_{t\in[0,T_\vep]}\mathfrak{E}^\vep(t)\right).
\end{align}
where $\delta$ may be adjusted when necessary.

Similarly, we also have
\begin{align}\label{nor-est-1.4}
	\int_0^t\norm{\bar{\D}^\beta{\mathcal{G}}}_{0}^2&\ls\int_0^t\left(\norm{\bar\D^m\nabla\eta}_0^2\norm{\tilde\rho_0 J a a}_{L^\infty}^2+\norm{\bar\D^\beta\nabla\eta}_0^2\norm{\bar\D\nabla\eta}_{L^\infty}^2\right)(1+\norm{\tilde\rho_0}_m^2)\nonumber\\&\ls TP\left(\sup_{t\in[0,T]}\mathfrak{E}(t)\right).
\end{align}
%
Therefore, substituting  \eqref{nor-est-1.1}-\eqref{nor-est-1.4} into \eqref{nor-est-1} yields
\begin{equation}\label{2nd-nor}
	\norm{\sqrt{\vep}\bar{\D}^\beta\D_3^2\eta}_0^2(t)+\int_0^t\left(\norm{\bar\D^\beta\partial_3^2\eta}_0^2+\norm{\vep\bar\D^\beta\D_3^2v}_0^2\right) \ls M_0+\delta\sup_{t\in[0,T]}\mathfrak{E}(t)+\sqrt TP\left(\sup_{t\in[0,T]}\mathfrak{E}(t)\right).
\end{equation}
Next, for any $|\beta|\leq m-1-\ell,\ell\in \mathbb{N}$, applying $\bar{\D}^{\beta}\D_3^\ell$ to \eqref{normal-eqn}, using the same argument as in the proof of \eqref{2nd-nor}, we can successively obtain the following estimate for $\ell=1,2,\cdots, m-1$,
\begin{align*}
&\|\sqrt{\vep}\bar{\D}^\beta\D_3^{\ell+2}\eta\|_{0}^2+\int_0^t\left(\norm{\bar{\D}^\beta\D_3^{\ell+2}\eta}_{0}^2+\norm{\vep\bar{\D}^\beta\D_3^{\ell+2} v}_{0}^2\right)\ls M_0+\delta\sup_{t\in[0,T]}\mathfrak{E}(t)+\sqrt TP\left(\sup_{t\in[0,T]}\mathfrak{E}(t)\right).	
\end{align*}
Therefore, we complete the proof of Lemma \ref{Nor-est}.
\end{pf}

\subsection{Proof of Proposition \ref{uniform estimates} for Navier-slip boundary condition}
We now combine the estimates achieved above and verify the \textit{a priori} assumption \eqref{A-priori-assum}. In fact, we can deduce from Lemmas \ref{basic-energy-est}, \ref{Tan-est}, and \ref{Nor-est} that
\begin{equation*}
	\sup_{t\in[0,T_\vep]}\mathfrak{E}^\vep(t)\leq M_0+\delta\sup_{t\in[0,T_\vep]}\mathfrak{E}^\vep(t)+\sqrt{T_\vep}P\left(\sup_{t\in[0,T_\vep]}\mathfrak{E}^\vep(t)\right).
\end{equation*}
As a consequence, the following inequality holds for any $t\in[0,T_\vep]$,
\begin{equation*}
	|J^\vep(t)-J_0^\vep|\leq \left|\int_{0}^{t}J_t^\vep\right|\leq {T_\vep}^\frac{1}{2}\|J_t^\vep\|_{L^2_T(L^\infty)}\ls\sqrt{T_\vep}\sup_{t\in[0,T_\vep]}\mathfrak{E}^\vep(t).
\end{equation*}
Similarly, we also have
\begin{equation*}
	|\D_j\eta^\vep_i(t)-\D_j\eta^\vep_{0i}|\ls \sqrt{T_\vep}\sup_{t\in[0,T_\vep]}\mathfrak{E}^\vep(t).
\end{equation*}
Thus, by choosing $\delta$ sufficiently small, there exists a constant $T$ which is independent of $\vep$, such that \eqref{A-priori-assum} is satisfied and
\begin{equation*}
	\sup_{t\in[0,T]}\mathfrak{E}^\vep(t)\leq 2M_0.
\end{equation*}

\subsection{Proof of Proposition \ref{uniform estimates} for the no-slip boundary condition}
For no-slip boundary condition \eqref{bdDirichlet}, the only difference occurs in the estimates of the boundary integral in basic energy estimates and tangential derivative energy estimates. In fact,  we find that all of the boundary integral $R_b$ in \eqref{infd} and $R_{b1}, R_{b2}$ in \eqref{infd} become zero under the no-slip boundary condition. Thus, Lemma \ref{basic-energy-est} and \ref{Tan-est} are valid for no-slip boundary condition. Meanwhile the pressure and normal derivative estimates are just the same since the boundary conditions are not used in the proof of Lemma \ref{Nor-est}. Combining these three Lemmas, the Proposition \ref{uniform estimates} is also valid for the no-slip boundary condition.




\section{Proof of Theorems}

\subsection{Proof of Theorem \ref{Theorem 1}} 
From the uniform estimates of $(\eta^\vep,q^\vep, v^\vep)$ achieved in Proposition \ref{uniform estimates}, we find that there exists a $T_0>0$ which is independent of $\vep$, such that $(\eta^\vep,q^\vep, v^\vep)$ satisfy $\sup\limits_{t\in[0,T_0]}\mathfrak{E}^\vep(t)\leq C_1,$ which complete the proof of Theorem \ref{Theorem 1}. 

\subsection{Proof of Theorem \ref{vanishing viscosity1}} 
It follows from Proposition \ref{uniform estimates} that $\eta^\vep$ is uniformly bounded in $L^\infty(0,T_0;H^m)$, $\DD\eta^\vep$ is uniformly bounded in $L^2(0,T_0; H^m)$, and $\D_t\eta^\vep$ is uniformly bounded in $L^\infty(0,T_0;H^{m-1})\cap L^2(0,T_0;H^{m-1})$.  Then,  by using the Aubin-Lions compactness theorem (c.f. \cite{simon1986}), we obtain $\eta^\vep$ is compact in $C([0,T_0];H^{m-1})$. Precisely, there exist a subsequence $\vep_n\rightarrow 0^+$ and a function $\eta$, such that $\eta^{\vep_n}\rightarrow \eta$ in $C([0,T_0];H^{m-1})$ as $\vep_n\rightarrow 0^+$. Similarly, we can also have $v^{\vep_n}\rightarrow v$ in $C([0,T_0];H^{m-2})$ as $\vep_n\rightarrow 0^+$. Such convergence properties allow us to take the limit in \eqref{FEL-final} and prove that $(\eta,v)$ is a solution to the elastodynamic equations \eqref{FEL-final-2}. Thanks to the uniqueness of classical solutions to \eqref{FEL-final-2}, we have that the whole family $(\eta^\vep,v^\vep)$ converge to $(\eta,v)$.

\bigskip
	
\section*{Acknowledgments}
Xumin Gu was supported by National Natural Science Foundation of China (Grant No.
12031006) and the Shanghai Frontier Research Center of Modern Analysis.
 F.  Xie was supported by National Natural Science Foundation of China No. 12271359, 11831003, 12161141004, Shanghai Science and Technology Innovation Action Plan No. 20JC1413000 and Institute of Modern Analysis-A Frontier Research Center of Shanghai.
D. Wang was supported in part by National Science Foundation  grants  DMS-1907519 and DMS-2219384.

\bigskip

%
%
%
%
%
%
%
%

\end{document}